\newtheorem{theorem}[equation]{Theorem}
\newtheorem{proposition}[equation]{Proposition}
\newtheorem{cor}[equation]{Corollary}
\newtheorem{lemma}[equation]{Lemma}
\newtheoremstyle{named}{}{}{\itshape}{}{\bfseries}{.}{.5em}{\thmnote{#3}#1}
\theoremstyle{named}
\newcommand{\Keler}             {K\"{a}hler }
\newcommand{\liu}{\mathfrak{u}}
\newcommand{\liek}{\mathfrak{k}}
\newcommand{\lieg}{\mathfrak{g}}
\newcommand{\liep}{\mathfrak{p}}
\newcommand{\desudt}[1] []      {\dfrac {\mathrm {d} #1 }{\mathrm {dt}}}
\newcommand{\desudtzero}        {\desudt \bigg \vert _{t=0} }
\newcommand{\sx}{\langle}
\newcommand{\xs}{\rangle}
\newcommand{\scalo}{\sx \cdot , \cdot \xs}
\newcommand{\U} {\operatorname{U}}
\newcommand{\lra}{\longrightarrow}
\newcommand{\C}{\mathbb{C}}
\newcommand{\R}{\mathds{R}}
\renewcommand{\phi}{\varphi}
\newcommand{\meo}{\end{document}}
\newcommand{\mup}{\mu_\liep}
\newcommand{\mupb}{\mu_\liep^\beta}
\begin{document}
\title{A splitting result for real submanifolds of a K\"ahler manifold}
\author{Leonardo Biliotti}
\address{(Leonardo Biliotti) Dipartimento di Scienze Matematiche, Fisiche e Informatiche \\
          Universit\`a di Parma (Italy)}
\email{leonardo.biliotti@unipr.it}
\begin{abstract}
Let $(Z,\omega)$ be a connected K\"ahler manifold with an holomorphic action of the complex reductive Lie group $U^\C$, where $U$ is a compact connected Lie group acting in a hamiltonian fashion. Let $G$ be a closed compatible Lie group of $U^\C$ and let $M$ be a $G$-invariant connected submanifold of $Z$. Let $x\in M$. If $G$ is a real form of $U^\C$, we investigate conditions such that $G\cdot x$ compact implies $U^\C\cdot x$ is compact as well. The vice-versa is also investigated. We also characterize $G$-invariant real submanifolds such that the norm square of the gradient map is constant. As an application, we prove a splitting result for real connected submanifolds of $(Z,\omega)$  generalizing a result proved in \cite{pg}, see also \cite{bg,bs}.
\end{abstract}
 \keywords{gradient map; real reductive Lie groups}

% \thanks{ The first author was supported by the Project MIUR
% ``Geometric Properties of Real and Complex Manifolds'' and by GNS% of INdAM.  The second author was supported by the PRIN 2007 MIUR
% ``Moduli, strutture geometriche e loro applicazioni'' and by GNSAGA
% of INdAM.  } %
%
 \subjclass[2010]{22E45,53D20}
\thanks{The author was partially supported by Project PRIN  2017 ``Real and Complex Manifolds: Topology, Geometry and holomorphic dynamics'' and by GNSAGA INdAM.}
\maketitle
\section{Introduction}
Let $(Z, \omega)$ be a \Keler manifold. Assume that $U^\C$ acts
holomorphically on $Z$, that $U$ preserves $\omega$ and that there is a
momentum map for the $U$ action on $Z$. This means there is a map $\mu:Z \lra \liu^*$, where $\liu$ is the Lie algebra of $U$ and $\liu^*$ is its dual, which is $U$ equivariant with respect to the given action of $U$ on $Z$ and the coadjoint action $\mathrm{Ad}^*$ of $U$ on $\liu^*$ and satisfying the following condition. Let $\xi \in \liu$. We denote by $\xi_Z$ the induced vector field on $Z$, i.e., $\xi_Z (p)=\desudt \vert_{t=0} \exp (t \xi ) p $. Let $\mu^\xi$ be the function $\mu^\xi (z) := \mu(z) (\xi)$, i.e., the contraction of the moment map along $\xi$. Then
$\mathrm{d} \mu^\xi =\it{i}_{\xi_Z} \omega$.

Let $G$ be  a closed connected subgroup
of $U^\C$ compatible with respect
to the Cartan decomposition of $U^\C${, i.e. $G=K\exp (\liep)$, for $K=U\cap
G$ and $\liep=\lieg \cap i\liu$ \cite{helgason,knapp-beyond}.  The
inclusion $i \liep \hookrightarrow \liu$ induces by restriction a
$K$-equivariant map $\mu_{i \liep}:Z \lra (i \liep)^*$ \cite{heinzner-schwarz-stoetzel,heinzner-stoetzel-global})}.

Let $\scalo$ be a $U$-invariant scalar product on $\liu$. Let $\scalo$ denote also the inner product on $i\liu$ such that $i$ be an isometry  of $\liu$ into $i\liu$.
Hence we may identify $\liu^*$ and $\liu$  by means of $\scalo$ and so we view $\mu$ as a map
$\mu:Z \lra \liu$. Therefore, we may view $\mu_{i\liep}$ as a map $\mup:Z \lra \liep$ as follows:
\[
\sx \mup(x), \beta \xs=-\sx \mu(x),i\beta\xs.
\]
We call $\mup$  the \emph{$G$-gradient map associated with $\mu$}.  We also set $\mupb:= \sx \mup, \beta \xs$. By definition, it follows that $\mathrm{grad}\mupb =\beta_Z$. If $M$ is a $G$-stable locally closed real submanifold of $Z$, we may consider $\mup$ as a mapping $\mup:M \lra \liep$ such that $\mathrm{grad}\mup =\beta_M$, where the gradient is computed with respect to the induced Riemannian metric on $M$. Since $M$ is $G$-stable it follows $\beta_Z(p)=\beta_M(p)$ for any $p\in M$.

Assume that $G$ is a real form of $U^\C$. If $U^\C \cdot x$ is compact, then it is well-known that $G$ has a closed orbit contained in $U^\C \cdot x$ \cite{heinzner-schwarz-stoetzel}. On the other hand, if $G\cdot x$ is closed then it is not in general true that $U^\C \cdot x$ is closed as well \cite{gjt}. In Section \ref{se1}, we investigate conditions such that $G \cdot x$ compact implies $U^\C \cdot x$ is compact. If $G\cdot x$ is compact then we give a necessary condition to $U^\C \cdot x$ be compact. If $M$ is Lagrangian, then $U^\C \cdot x$ being compact implies $G\cdot x$ is a Lagrangian submanifold of $U^\C \cdot x$. Finally, we study the case when $Z$ is $U^\C$-semistable, $M$ is $G$-semistable and is contained in the zero level set of the gradient map of $K^\C$. As an application we proof a well-known result of Birkes \cite{birkes}.

A strategy for analyzing the $G$ action on $M$ is to view the function $\nu_\liep : M \lra \R$, \[\nu_\liep (x)=\parallel \mup (x) \parallel^2\] as a Morse like function. The function $\nu_\liep$ is called the norm square of the gradient map. If $M$ is compact or $\mup$ is proper, then associated to the critical points of $\nu_\liep$ we have $G$-stable submanifold of $M$ that they are strata of a Morse type stratification of $M$ \cite{heinzner-schwarz-stoetzel,kirwan}. In Section \ref{se2}, we investigate under which condition $\nu_\liep$ is constant. The following result has some interest itself.
\begin{proposition}\label{main2}
Let $M$ be a $G$-stable connected submanifold of $Z$ and let $\mup:M \lra \liep$ be the restricted gradient map. Then the square of the gradient map $\nu_\liep:M \lra \R$ is constant  if and only if any $G$ orbit is compact.
\end{proposition}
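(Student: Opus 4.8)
The plan is to reduce the statement to an infinitesimal identity. Fix an orthonormal basis $\beta_1,\ldots,\beta_r$ of $\liep$ for $\scalo$. Since $\nu_\liep=\sum_{j}\sx\mup,\beta_j\xs^2$ and $\grad\sx\mup,\beta_j\xs=(\beta_j)_M$, linearity of $\xi\mapsto\xi_M$ gives
\[
\grad\nu_\liep(x)=2\sum_{j}\sx\mup(x),\beta_j\xs\,(\beta_j)_M(x)=2\,(\mup(x))_M(x).
\]
Hence $\grad\nu_\liep$ is everywhere tangent to the $G$-orbits in $M$, and since $M$ is connected $\nu_\liep$ is constant if and only if $(\mup(x))_M(x)=0$, i.e. $\mup(x)\in\lieg_x$, for every $x\in M$. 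So it is enough to prove: $\mup(x)\in\lieg_x$ for all $x\in M$ if and only if every $G$-orbit is compact.

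Assume first that every $G$-orbit is compact and fix $x$; then $O:=G\cdot x$ is a compact, hence embedded, submanifold. As $\grad\nu_\liep$ is tangent to $O$ it coincides with the gradient of $\nu_\liep|_O$ for the induced metric, and this vanishes at a maximum point $p$ of $\nu_\liep|_O$; so $\mup(p)\in\lieg_p$ for at least one $p\in O$. To promote this to every point of $O$ I would invoke the structural fact that a compact $G$-orbit is a single $K$-orbit, $K=U\cap G$ (which can be read off from the structure theory of compatible subgroups, cf. \cite{heinzner-schwarz-stoetzel,heinzner-stoetzel-global}): then $\mup(O)=\Ad(K)\mup(x)$ lies on one sphere of $\liep$, so $\nu_\liep|_O$ is constant, i.e. $\mathrm{d}\nu_\liep$ kills every vector tangent to $O$. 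Since $\grad\nu_\liep(x)$ is itself tangent to $O$, this gives $\|\grad\nu_\liep(x)\|^2=0$; hence $\grad\nu_\liep\equiv0$ on the connected $M$ and $\nu_\liep$ is constant.

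Conversely assume $\nu_\liep\equiv c$, so $(\mup(x))_M(x)=0$ everywhere. If $c=0$ the argument finishes directly: for $x\in M$ and $\xi\in\liep$ the curve $\exp(t\xi)x$ stays in $M\subseteq\mup^{-1}(0)$, hence $\sx\mup(\exp(t\xi)x),\xi\xs\equiv0$; differentiating, $\|\xi_M(\exp(t\xi)x)\|^2\equiv0$, so $\liep\subseteq\lieg_x$, and $G=K\exp(\liep)$ forces $G\cdot x=K\cdot x$, which is compact since $K=U\cap G$ is compact. If $c>0$, the function $t\mapsto\sx\mup(\exp(t\beta)y),\beta\xs$ has derivative $\|\beta_M(\exp(t\beta)y)\|^2\ge0$, hence is non-decreasing, and is bounded by $\sqrt c\,\|\beta\|$ because $\|\mup\|\equiv\sqrt c$; so all limits $\lim_{t\to+\infty}\sx\mup(\exp(t\beta)y),\beta\xs$ ($\beta\in\liep$, $y\in M$) are finite. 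Moreover, putting $\beta:=\mup(x)$, we have $\beta_M(x)=0$ and $\sx\mup(y),\beta\xs\le\|\mup(y)\|\,\|\beta\|=c=\sx\mup(x),\beta\xs$ for all $y$, so $x$ globally maximises $\sx\mup(\cdot),\beta\xs$ on $M$. I would then feed these facts into the Kempf--Ness/Hilbert--Mumford machinery for the gradient map of a real reductive group (\cite{heinzner-schwarz-stoetzel}) to deduce that $G\cdot x$ is closed, and combine this with the boundedness of $\mup$ and with $(\mup)_M\equiv0$ (so that $\|\mup\|$ is constant on each $G$-orbit) to conclude that $G\cdot x$ is in fact compact.

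The real obstacle, in both implications, is exactly this passage between the pointwise condition $\mup(x)\in\lieg_x$ and compactness of orbits: the fact that a compact $G$-orbit is a single $K$-orbit — equivalently, that $\nu_\liep$ is constant along it — in the first direction, and the control of the orbit at infinity from the sole boundedness of $\mup$ in the second. Both rely on the Morse-theoretic structure of $\nu_\liep$ and the Kempf--Ness theory for $\mup$ rather than on a soft argument; the remaining steps are short computations.
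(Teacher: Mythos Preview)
Your computation $\grad\nu_\liep(x)=2\,(\mup(x))_M(x)$ is correct and is exactly what underlies the paper's argument. The direction ``every $G$-orbit compact $\Rightarrow$ $\nu_\liep$ constant'' is fine: you use that a compact $G$-orbit equals a single $K$-orbit, hence $\nu_\liep$ is constant on each orbit, hence every point is critical, hence $\nu_\liep$ is globally constant. This is precisely the paper's route (its Lemma~\ref{meo}, second item, which in turn quotes Corollary~6.12 of \cite{heinzner-schwarz-stoetzel}).

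The gap is in the converse, the $c>0$ case. There your argument does not close: you establish that $x$ is a critical point of $\nu_\liep$ and a global maximiser of $\mup^{\beta}$ with $\beta=\mup(x)$, but then only gesture at ``Kempf--Ness/Hilbert--Mumford machinery'' to obtain closedness, and closedness of $G\cdot x$ together with boundedness of $\mup$ on $M$ is not enough to force compactness of $G\cdot x$. The point you are missing is that the very same external input you already used has a sharper form: Corollary~6.12 in \cite{heinzner-schwarz-stoetzel} says that if $\nu_\liep$ restricted to $G\cdot x$ has a \emph{local maximum} at $x$, then $G\cdot x=K\cdot x$. When $\nu_\liep$ is constant on $M$ this hypothesis is satisfied trivially at every $x$, so each orbit is a $K$-orbit and hence compact --- no case split on $c$, no Hilbert--Mumford, no separate ``closed $\Rightarrow$ compact'' step. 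That is exactly how the paper proceeds. In short, you identified the right structural fact but applied only its weaker consequence (``compact $\Rightarrow$ $K$-orbit'') in one direction, while its full strength (``local max of $\nu_\liep$ on the orbit $\Rightarrow$ $K$-orbit'') finishes both directions in one line.
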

 By the stratification Theorem \cite{heinzner-schwarz-stoetzel}, it follows that $M$ coincides with a maximal pre-stratum and $\mup(M)=K\cdot \beta$. Moreover, $M=K\times_{K^\beta} \mup^{-1}(\beta)$, where
$K^\beta=\{k\in K:\, \mathrm{Ad}(k)(\beta)=\beta\}$.  Let $x\in \mup^{-1}(\beta)$. By the $K$-equivariance of $\mup$, it follows that the stabilizer $K_x \subseteq K^\beta$. Although $G\cdot x$ is closed, it is not true in general $K_x = K^\beta$. Indeed, let $U$ be a connected, compact semisimple Lie group and let $\rho:U \lra \mathrm{SL}(W)$ be a complex representation. Let $G$ be a noncompact connected semisimple  real form of $U^\C$. It is well known that $U^\C$ has a closed orbit in $\mathbb P(W)$, which is a complex $U$-orbit \cite{guillemin}. Let $\mathcal O$ denote a closed orbit of $U^\C$. If $x\in \mathcal O$ realizes the maximum of the norm squared of the $G$-gradient map  restricted to $\mathcal O$, then $G\cdot x$ is closed  and it is a $K$ orbit \cite{heinzner-schwarz-stoetzel}. Now, $K_x=K\cap U^{\mu(x)}$ and $U^{\mu(x)}=U_x$ since $U\cdot x$ is complex \cite{guillemin}. However, $\mu(x) \notin \liep$ and so $K_x$ does not coincide in general with $K^{\mup(x)}$.

If $M$ is a $U$-invariant compact connected complex submanifold of $(Z,\omega)$, then $\nu_{i\mathfrak u}$ constant is equivalent to $U$ is semisimple and $M=U/U_\beta \times \mu^{-1}(\beta)$. The above splitting is Riemannian \cite{pg} (see
also \cite{bg,bs} for the  same result  under the assumption that $M$ is symplectic). In this paper we prove this splitting result without any assumption on $M$.
\begin{theorem}\label{main2}
Let $M$ be a $U^\C$-stable connected submanifold of $Z$ and let $\mu:M \lra \liu$ be the restricted momentum map. Then the square of the momentum map $\parallel \mu \parallel^2$ is constant  if and only if $U$ is semisimple and $M$ is $U$-equivariantly isometric to the product of a flag manifold  and an embedded, closed submanifold which is acted on trivially by $U$.
\end{theorem}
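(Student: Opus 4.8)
The plan is to deduce the theorem from Proposition~\ref{main2} (which relates constancy of the norm square of the gradient map to compactness of all orbits), applied with $G=U^\C$. In that case $K=U$ and $\liep=i\liu$, and under the isometry $i\liu\cong\liu$ the gradient map $\mup$ is identified with $\mu$; hence $\parallel\mu\parallel^2$ is constant if and only if every $U^\C$-orbit in $M$ is compact. So it suffices to show that this orbit-compactness is equivalent to the asserted splitting. Throughout, $g$ denotes the induced Riemannian metric on $M$, so that $g=\om(\cdot,J\cdot)$ and, from $\grad\mupb=\beta_Z$ together with the relation between $\mup$ and $\mu$, $\grad\mu^\xi=(i\xi)_M$ for every $\xi\in\liu$, where $(i\xi)_M$ is the vector field on $M$ induced by $i\xi\in\liu^\C$.

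Assume first that $M$ is $U$-equivariantly isometric to $(U/L)\times N$, with $U/L$ a flag manifold on which $U$ acts as usual and $U$ acting trivially on $N$. The $U$-orbits are then the slices $(U/L)\times\{n\}$; since the two factors are $g$-orthogonal and $T(U/L)$ is $J$-invariant, each such slice is a complex submanifold of $Z$ carrying the induced \Keler structure, so $U^\C\cdot x=U\cdot x$ is compact. (Equivalently, $\grad\mu^\xi=(i\xi)_M$ shows that each $\mu^\xi$ is constant along $\{o\}\times N$, whence $\mu(M)=\Ad(U)\mu(o)$ lies on a single sphere and $\parallel\mu\parallel^2$ is constant.)

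Conversely, assume every $U^\C$-orbit in $M$ is compact. A compact orbit $\OO_x=U^\C\cdot x$ is a compact complex submanifold, hence a generalized flag manifold $U^\C/P_x=U/(U\cap P_x)$, so $\OO_x=U\cdot x$; moreover $\mu|_{\OO_x}$ is a momentum map for the transitive Hamiltonian $U$-action on the compact symplectic manifold $(\OO_x,\om|_{\OO_x})$, so, flag manifolds being simply connected, it is a $U$-equivariant diffeomorphism onto the coadjoint orbit $\Ad(U)\mu(x)$, and $U_x=U^{\mu(x)}$, which is connected. Fix $x_0\in M$, put $\beta=\mu(x_0)$, $L=U^\beta=U_{x_0}$ and $F=\mu^{-1}(\beta)$. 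The crux is to show that $\mu(M)$ is a single $\Ad(U)$-orbit, equivalently that $\mu|_M$ has constant rank $\dim\OO_{x_0}$, equivalently that the $U$-action on $M$ has a single orbit type. I would prove this by a slice argument at $x_0$: since $\ker d\mu_{x_0}$ equals the $g$-orthogonal complement of $T_{x_0}\OO_{x_0}$ (using $g=\om(\cdot,J\cdot)$ and the $J$-invariance of $T_{x_0}\OO_{x_0}$), $x_0$ is a critical point of $\mu$ restricted to a slice; differentiating $\parallel\mu\parallel^2$ twice shows that the $\beta$-component of the Hessian of $\mu$ along the slice vanishes; and, as $\grad\mu^\beta=(i\beta)_M$, the function $\mu^\beta$ satisfies $\mu^\beta\le\parallel\beta\parallel^2$ with equality exactly on $F$ (Cauchy--Schwarz and $\parallel\mu\parallel\equiv\parallel\beta\parallel$), has gradient flow given by the one-parameter group $\exp(t\,i\beta)$ in $U^\C$, and is Morse--Bott with critical set the fixed-point set of the torus $\overline{\exp(\R\beta)}$ containing $F$; transverse nondegeneracy of $\mu^\beta$ is then incompatible with the vanishing of that Hessian unless the slice is $\overline{\exp(\R\beta)}$-fixed, and iterating inside the descending chain of stabilizers $U^{\mu(s)}\subseteq L$ forces the slice action to be trivial. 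Granting this, $U\cdot F=\mu^{-1}(\Ad(U)\beta)$ is open and closed in $M$, hence equals $M$, and for $p\in F$ we get $U_p=U^{\mu(p)}=U^\beta=L$, so $L$ acts trivially on $F$; thus $M=U\times_L F=(U/L)\times F$ as $U$-spaces.

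Finally, the decomposition $T_xM=T_x\OO_x\oplus T_xF$ is $g$-orthogonal (again by the description of $\ker d\mu_x$), $U$ acts by isometries transitively on each orbit $\OO_p\cong U/L$, and since $\mu$ carries every $\OO_p$, $p\in F$, diffeomorphically onto the one coadjoint orbit $\Ad(U)\beta$ by the canonical equivariant map, the induced \Keler metric on each $\OO_p$ is one fixed $U$-invariant metric on $U/L=U^\C/P_\beta$ (the compatible invariant complex structure being determined by the fixed invariant symplectic form); hence $M$ is $U$-equivariantly isometric to the Riemannian product of the flag manifold $U/L$ and the closed embedded submanifold $F$, on which $U$ acts trivially. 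Moreover $\liez(\liu)\subseteq\liu^\beta=\liel$, so $\liez(\liu)\subseteq\bigcap_{u\in U}\Ad(u)\liel$, the Lie algebra of the ineffectivity kernel of the $U$-action on $M$; since the action is effective, $\liez(\liu)=0$, i.e. $U$ is semisimple. I expect the single-orbit-type step to be the main obstacle; the rest is assembled from standard facts about compact orbits of reductive groups, momentum maps of coadjoint orbits, and the Morse theory of $\mu^\beta$.
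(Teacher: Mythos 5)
Your skeleton is the same as the paper's: reduce to ``every $U^\C$-orbit is compact'' via Proposition \ref{meo2}, identify each compact orbit as a flag manifold with $U_x=U^{\mu(x)}$, get semisimplicity from the center acting trivially, split $M$ as $U/L\times F$, and then show the splitting is Riemannian. Two of your steps genuinely diverge from the paper. For the Riemannian part the paper computes, via the Cartan formula for $d\omega$ applied to $(v,-J\xi_M,\xi_M)$ with $v$ tangent to the fiber, that $g(\xi_M,\xi_M)$ is constant along $\mu^{-1}(\beta)$; you instead invoke uniqueness of the invariant complex structure on $U/U^\beta$ compatible with and positive for the KKS form. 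That is a legitimate alternative (the sign of $\alpha(\beta)$ on each root space forces the parabolic), though it should be stated and justified rather than parenthesized. Your forward direction also needs a small correction: $\mu^\xi$ need not be constant along $\{o\}\times N$ for general $\xi$; the right observation is that $\mu(x)\in\mathfrak z(\liel)\subseteq\liel=\liu_x$ for $x\in\{o\}\times N$, so $\mu(x)_Z(x)=0$ and $d\|\mu\|^2_x=2\,\omega(\mu(x)_Z(x),\cdot)=0$.

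The genuine gap is exactly where you predicted it: the single-orbit-type step. The paper does not reprove this; it quotes the Heinzner--Schwarz--St\"otzel stratification theorem (Proposition \ref{splitting}: $\nu_\liep$ constant forces $M=S_\beta$ and $\mu(M)=U\cdot\beta$). Your Morse--Bott sketch establishes correctly that $\mathrm{Hess}(\mu^\beta|_M)_{x_0}(v,v)=-\|d\mu_{x_0}(v)\|^2$, hence vanishes exactly on $\ker d\mu_{x_0}=T_{x_0}S$; but the inference ``transverse nondegeneracy is incompatible with this unless the slice is $\overline{\exp(\R\beta)}$-fixed'' does not follow. Nondegeneracy transverse to the critical manifold only yields $T_{x_0}S\subseteq T_{x_0}\bigl(M\cap\mathrm{Fix}(\overline{\exp\R\beta})\bigr)$, an equality of tangent spaces at the single point $x_0$; tangency at a point does not give containment of $S$ in the fixed-point set, which is what you need to conclude $\mu\equiv\beta$ on $S$. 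The subsequent ``iterating inside the descending chain of stabilizers'' is too vague to carry the argument and is in fact unnecessary. The step can be repaired: take $S=\exp_{x_0}(B)$ with $B\subseteq\ker d\mu_{x_0}$ a small ball; since the fixed-point set of the torus is totally geodesic and has tangent space $\ker d\mu_{x_0}$ at $x_0$ by your Hessian computation, $S$ lies in it, so $\beta_M\equiv 0$ on $S$, hence $\mathrm{grad}(\mu^\beta|_M)=J\beta_M\equiv 0$ on $S$, hence $\mu^\beta\equiv\|\beta\|^2$ on $S$ and $\mu\equiv\beta$ on $S$ by the equality case of Cauchy--Schwarz; then $\{x:\mu(x)\in\Ad(U)\beta\}$ and its analogues for other values are all open, and connectedness finishes. (Alternatively, and more cheaply: since $\liu_x=\liu^{\mu(x)}$ at every point, $\mathrm{im}\,d\mu_x\subseteq T_{\mu(x)}\Ad(U)\mu(x)$, so $f\circ\mu$ is locally constant for every $\Ad$-invariant $f$, and invariant polynomials separate adjoint orbits of a compact group.) As written, though, the crux step is asserted rather than proved.
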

Assume that $G$ is a real form of $U$. The momentum map of $U$ on $Z$ induces a gradient map $\mu_{i\mathfrak k}$ of $K^\C$ in $Z$. We say that $M$ is $G$-semistable if $M=\{p\in M:\, \overline{G \cdot p} \cap \mup^{-1}(0) \neq \emptyset \}$.
\begin{theorem}\label{main3}
Assume that $Z$ is $U^\C$-semistable and $M$ is a $G$-semistable real connected submanifold of $Z$. Assume also
$M$ is contained in the zero fiber of $\mu_{i\mathfrak k}$. Then the square of the $G$-gradient map $\parallel \mup \parallel^2$ is constant  if and only if $G$ is semisimple and $M$ is $K$-equivariantly isometric to the product of a real flag and an embedded closed submanifold which is acted on trivially by $K$.
\end{theorem}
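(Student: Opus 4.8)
The plan is to read off both implications from Proposition~\ref{main2}, which identifies ``$\|\mup\|^2$ constant on $M$'' with ``every $G$-orbit in $M$ is compact'', and then to use the three standing hypotheses to reduce the statement to Theorem~\ref{main2} applied to a $U^\C$-stable submanifold.

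For the implication ``$\Leftarrow$'' I would argue as follows. A real flag manifold $F$ is compact and $K$ acts transitively on it, so $\mup(F)$ is a single adjoint orbit $K\cdot\beta$ and $\nu_\liep\equiv\|\beta\|^2$ is constant on $F$; since $\grad\mup^\xi=\xi_M$ is tangent to the $F$-factor and vanishes along the trivial factor $N$, the map $\mup$ is constant in the $N$-directions, so $\nu_\liep$ is constant on $M$. (The hypothesis that $M$ is $G$-semistable then forces $\beta=0$, because a compact, hence closed, $G$-orbit must meet $\mup^{-1}(0)$; this is consistent with, but not needed for, constancy.)

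For the implication ``$\Rightarrow$'', suppose $\nu_\liep\equiv c$. By Proposition~\ref{main2} every $G$-orbit in $M$ is compact, hence closed in $Z$; since $M$ is $G$-semistable each such orbit meets $\mup^{-1}(0)$, so $c=0$ and $\mup\equiv 0$ on $M$. Then $\xi_M=\grad\mup^\xi\equiv 0$ for all $\xi\in\liep$, so $\liep$ — and, the $U^\C$-action being holomorphic, also $i\liep$ — acts trivially at the points of $M$, and $G$ acts on $M$ through $K$. Writing $\liu=\liek\oplus i\liep$ and combining $\mup|_M=0$ with the assumption $M\subseteq\mu_{i\liek}^{-1}(0)$ gives $\mu|_M\equiv 0$; and since each $\xi\in\liu$ decomposes as $\eta+i\zeta$ with $\eta\in\liek$, $\zeta\in\liep$, the field $\xi_Z$ is tangent to $M$ along $M$, so $M$ is $U$-stable. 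Next I would invoke the material of Section~\ref{se1}: since $Z$ is $U^\C$-semistable and $M\subseteq\mu_{i\liek}^{-1}(0)$, compactness of $G\cdot x$ for $x\in M$ forces $U^\C\cdot x$ to be compact; a compact $U^\C$-orbit is the $U$-orbit through any of its points, hence $U^\C\cdot x=U\cdot x\subseteq M$, and $M$ is $U^\C$-stable. Theorem~\ref{main2}, applied to the $U^\C$-stable connected submanifold $M$ with $\|\mu\|^2\equiv 0$, then yields that $U$ — hence $G$, as $\lieg^\C=\liu^\C$ — is semisimple and that $M$ is $U$-equivariantly isometric to a product of a flag manifold and a closed submanifold on which $U$ acts trivially. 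Because $\mu$ vanishes on $M$, the flag factor is a point, so $U$ and in particular $K$ act trivially on $M$, which is the asserted $K$-equivariant splitting (with a one-point real flag factor).

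The hard part will be the reduction ``$G\cdot x$ compact $\Rightarrow U^\C\cdot x$ compact'' under the standing hypotheses, which is exactly what turns $M$ into a $U^\C$-stable submanifold and lets Theorem~\ref{main2} take over; this is the circle of questions treated in Section~\ref{se1}, and it is where $U^\C$-semistability of $Z$ and the condition $M\subseteq\mu_{i\liek}^{-1}(0)$ enter decisively. Everything else is bookkeeping with Proposition~\ref{main2} and Theorem~\ref{main2}.
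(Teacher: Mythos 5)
Your argument takes a genuinely different route from the paper's. The paper proves the forward implication by keeping $\beta=\mup(x)$ general: it invokes Proposition \ref{splitting} to write $M=K\times_{K^\beta}\mup^{-1}(\beta)$, uses Proposition \ref{closed-orbit} together with $U_x=U^{\mu(x)}$ (Guillemin--Sternberg, for compact complex orbits) to get $K_x=K^\beta$ and hence untwist the bundle into a genuine product $K/K^\beta\times\mup^{-1}(\beta)$, deduces semisimplicity of $G$ from the fact that the centre of $U^\C$ acts trivially, and then reruns the Cartan-formula computation from the proof of Theorem \ref{main2} to see the splitting is Riemannian. You instead observe at the outset that $G$-semistability forces the constant to be $0$: every orbit is compact, hence $G\cdot p=K\cdot p$ meets $\mup^{-1}(0)$, and $\mup(K\cdot p)=K\cdot\mup(p)\ni 0$ only if $\mup(p)=0$. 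This is correct, is sharper than anything the paper records (it shows the ``real flag'' in the conclusion is necessarily a point and $\liep$ acts trivially on $M$), and it is a genuinely useful remark. Your derivation that $M$ is then $U$-stable (via $\liu=\liek\oplus i\liep$ and the vanishing of $(i\zeta)_Z=J\zeta_Z$ along $M$) is also fine.

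The gap is in the step you yourself label ``the hard part'': the implication ``$G\cdot x$ compact $\Rightarrow U^\C\cdot x$ compact'' is not actually available from Section \ref{se1}. Proposition \ref{closed-orbit} delivers only that $U^\C\cdot x$ is \emph{closed}, and a closed $U^\C$-orbit need not be compact (the only statement in Section \ref{se1} that yields compactness, the Corollary to Lemma \ref{urka}, requires $\dim K\cdot x=\dim K\cdot\mup(x)$, which with $\mup(x)=0$ would force $K\cdot x$ to be a point --- i.e.\ exactly the conclusion you are after). The same problem infects your claim that $M$ is $U^\C$-stable: since $\mu_{i\liek}$ vanishes on $M$ and $M$ is only $K$-stable, the identity $\mathrm{grad}\,\mu_{i\liek}^\eta=J\eta_Z$ gives $J(\liek\cdot x)\perp T_xM$, so $i\liek\cdot x\subseteq T_xM$ would force $J(\liek\cdot x)\perp J(\liek\cdot x)$, i.e.\ $\liek\cdot x=0$. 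Thus ``$M$ is $U^\C$-stable'' is \emph{equivalent} to the triviality of the $K$-action you want to extract from Theorem \ref{main2}, and the reduction is circular as written. (To be fair, the paper's own proof asserts ``$U^\C\cdot x$ is compact'' while citing a proposition that only gives closedness, so this is a weak point of the source as well; but your proposal does not close it either.) A smaller issue: in the ``$\Leftarrow$'' direction you assert that $\xi_M=\mathrm{grad}\,\mup^\xi$ is tangent to the flag factor for $\xi\in\liep$; the product decomposition is only assumed $K$-equivariant, so $\liep\cdot x\subseteq\liek\cdot x$ needs justification (the paper silently omits this direction altogether).
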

\section{Closed orbits and gradient map}\label{se1}
Let $(Z, \omega)$ be a \Keler manifold. Assume that $U^\C$ acts
holomorphically on $Z$, that $U$ preserves $\omega$ and that there is a
momentum map for the $U$ action on $Z$. Let $G\subset U^\C$ be a closed compatible subgroup and let $M$ be a $G$-invariant submanifold of $(Z,\omega)$ and let $\mup:M \lra \liep$ be the associated $G$-gradient map.
\begin{lemma}\label{meo}
Let $x\in M$. Then:
\begin{itemize}
\item if $x$ realizes a  local maximum of $\nu_\liep$, then $G\cdot x=K\cdot x$ and so it is compact;
\item if $G\cdot x$ is compact, then $G\cdot x=K\cdot x$ and $x$ is  a critical point of $\nu_\liep$.
\end{itemize}
\end{lemma}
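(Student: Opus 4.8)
The plan is to reduce both assertions to the single fact that $\xi_M(x)=0$ for every $\xi\in\liep$, and to derive that from the convexity of the Kempf--Ness type function along the one--parameter subgroups $\exp(t\xi)$, $\xi\in\liep$. First I would record three preliminaries. (i) Differentiating $\nu_\liep=\sx\mup,\mup\xs$ and using $\operatorname{grad}\mu_\liep^\alpha=\alpha_M$ gives $\operatorname{grad}\nu_\liep(p)=2\,(\mup(p))_M(p)$, so $p$ is a critical point of $\nu_\liep$ precisely when $\beta_M(p)=0$ with $\beta:=\mup(p)$. (ii) Since the isotropy $G_x$ is again a compatible subgroup, $\lieg_x=(\liek\cap\lieg_x)\oplus(\liep\cap\lieg_x)$; hence if $\xi_M(x)=0$ for all $\xi\in\liep$, then $\liep\subseteq\lieg_x$, so $\exp(\liep)\subseteq G_x$ and $G\cdot x=K\exp(\liep)\cdot x=K\cdot x$, which is compact. (iii) For $\xi\in\liep$ and $p\in M$ the function $\varphi(t)=\mu_\liep^\xi(\exp(t\xi)p)$ is convex and non-decreasing, with $\varphi'(t)=\|\xi_M(\exp(t\xi)p)\|^2$ (a standard property of gradient maps on K\"ahler manifolds); a convex function bounded above on a half-line is non-increasing there, so a convex, non-decreasing $\varphi$ which is bounded above on $[0,\infty)$ is constant, and then $\xi_M(p)=0$. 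The second bullet now follows at once: if $G\cdot x$ is compact, then $\mup(G\cdot x)$ is compact, so each $\varphi(t)=\mu_\liep^\xi(\exp(t\xi)x)$ is bounded on $\R$, hence constant, hence $\xi_M(x)=0$; by (ii) $G\cdot x=K\cdot x$, and taking $\xi=\mup(x)$ gives $(\mup(x))_M(x)=0$, i.e.\ $x$ is critical.

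For the first bullet, let $x$ be a local maximum of $\nu_\liep$ and put $\beta:=\mup(x)$. By (i), $\operatorname{grad}\nu_\liep(x)=0$, so $\beta_M(x)=0$ and $x$ is critical. By Cauchy--Schwarz, $\mu_\liep^\beta(p)\le\|\mup(p)\|\,\|\beta\|=\sqrt{\nu_\liep(p)}\,\|\beta\|\le\|\beta\|^2=\mu_\liep^\beta(x)$ for $p$ near $x$, so $x$ is also a local maximum of $\mu_\liep^\beta$; since $\operatorname{grad}\mu_\liep^\beta=\beta_M$ vanishes on $F:=\{\,\beta_M=0\,\}$, the function $\mu_\liep^\beta$ is constant equal to $\|\beta\|^2$ on the component of $F$ through $x$, and the equality case of Cauchy--Schwarz then forces $\mup\equiv\beta$ there. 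Now the gradient flow of $\mu_\liep^\beta$ is $p\mapsto\exp(t\beta)p$, and near the local maximum $x$ it remains in a neighbourhood $W$ on which $\mu_\liep^\beta\le\|\beta\|^2$; hence for $\xi\in\liep$ and $|t|$ small the function $s\mapsto\mu_\liep^\beta(\exp(s\beta)\exp(t\xi)x)$ is convex, non-decreasing and bounded above, so constant, giving $\beta_M(\exp(t\xi)x)=0$. Thus $\exp(t\xi)x$ lies in $F$ near $x$, so $\mup(\exp(t\xi)x)=\beta$ and $\mu_\liep^\xi(\exp(t\xi)x)=\sx\beta,\xi\xs$ is constant in $t$; therefore $\xi_M(x)=0$, and since $\xi\in\liep$ is arbitrary, $G\cdot x=K\cdot x$ is compact.

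The step I expect to be the main obstacle is the trapping in the first bullet: one must know that the gradient flow of $\mu_\liep^\beta$ started near the local maximum $x$ stays in a fixed neighbourhood of $x$ (hence below the level $\|\beta\|^2$) and that $F$ is well behaved there. On $Z$ the function $\mu_\liep^\beta$ equals, up to sign, the component $\mu^{i\beta}$ of the momentum map of the torus $\overline{\exp(\R\,i\beta)}\subseteq U$, so it is Morse--Bott with non-degenerate transverse Hessian and the trapping near $x$ follows from the local normal form; transferring this to the $G$-invariant submanifold $M$ needs a little care, using that at a local maximum of $\nu_\liep$ the tangent space $T_xM$ avoids the positive transverse directions of $\mu_\liep^\beta$ at $x$.
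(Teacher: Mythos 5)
Your reduction of both bullets to the statement ``$\xi_M(x)=0$ for every $\xi\in\liep$'' is reasonable, and preliminaries (i) and (ii) are essentially fine (for (ii) you do not even need compatibility of $G_x$: $\xi_M(x)=0$ already gives $\exp(t\xi)x=x$). The fatal problem is preliminary (iii). The function $\varphi(t)=\mu_\liep^\xi(\exp(t\xi)p)$ is \emph{not} convex; what is true (and standard) is that $\varphi'(t)=\|\xi_M(\exp(t\xi)p)\|^2\ge 0$, i.e.\ that the Kempf--Ness primitive $\psi$ with $\psi'=\varphi$ is convex. You are claiming convexity one derivative too high: $\varphi''=2\sx\nabla_{\xi_M}\xi_M,\xi_M\xs$ is the Hessian of $\mu_\liep^\xi$ evaluated on its own gradient and has no sign. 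Concretely, for $U^\C=\C^*$ acting on $\PP^1$ with $\xi$ spanning $\liep=i\liu$ and $p$ not a fixed point, $\varphi(t)$ is essentially $\tanh t$: non-decreasing, bounded on all of $\R$, yet non-constant and with $\xi_Z(p)\neq 0$. Once convexity is gone, the deduction ``non-decreasing and bounded above $\Rightarrow$ constant $\Rightarrow$ $\xi_M(p)=0$'' fails, and both of your bullets collapse, since each terminates in exactly that step. The gap is not repairable by massaging the same inequality: your proof of the second bullet uses compactness of $G\cdot x$ only through boundedness of $\mup$ on the orbit, and in the $\PP^1$ example the open $\C^*$-orbit also has bounded gradient-map image, so your argument would ``prove'' that this open orbit is an $S^1$-orbit, which is false. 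Any correct proof must use compactness of the orbit itself (for instance, that the limit points of $\exp(t\xi)x$ as $t\to\pm\infty$ lie in $G\cdot x$ and not merely in its closure), and getting from there to $\xi_M(x)=0$ is genuinely non-trivial.

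For comparison, the paper does not attempt this from scratch: both bullets are reduced to the observation that $\nu_\liep$ restricted to $G\cdot x$ attains a local maximum at $x$ (immediate when $G\cdot x$ is compact, after moving $x$ to the maximum point), and then Corollary~6.12 of Heinzner--Schwarz--St\"otzel is invoked to conclude $G\cdot x=K\cdot x$. The criticality claim in the second bullet then follows from $\ker(\mathrm d\mup)_x=(\liep\cdot x)^\perp$ together with $\liep\cdot x\subseteq\liek\cdot x$ and the $K$-invariance of $\nu_\liep$ --- a computation you could use verbatim once $G\cdot x=K\cdot x$ is known. If you want a self-contained argument you will need to reprove something of the strength of that corollary; the convexity you would like to lean on simply is not there.
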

\begin{proof}
If $x$ realizes a local maximum for $\nu_\liep$, then $\nu_\liep:G \cdot x \lra \R$ has a local maximum at $x$. By Corollary $6.12$, $p.21$ in \cite{heinzner-schwarz-stoetzel}, it follows $G\cdot x=K\cdot x$.

Assume $G\cdot x$ is compact. Then $\nu_p :G\cdot x \lra \R$ has a local maximum. Applying, again, Corollary $6.12$ $p.21$ in \cite{heinzner-schwarz-stoetzel}, we get $G\cdot x=K\cdot x$. We compute the differential of $\nu_\liep$ at $x$. It is easy to check
\[
\mathrm d \nu_\liep (v)=2\langle (\mathrm d \mup)_x (v), \mup(x) \rangle.
\]
Therefore, keeping in mind that $\mathrm{Ker}\, (\mathrm d \mup)_{x}= (\liep \cdot x)^\perp$, where $\liep \cdot x=\{\xi_Z (x):\, \xi \in \liep\}$ see \cite{heinzner-schwarz-Cartan}, it follows $\mathrm ( d \nu_p)_x =0$ on $(\liep \cdot x)^\perp$. Since $G\cdot x=K\cdot x$, it follows $\liep  \cdot x \subset \mathfrak k \cdot x$ and so, keeping in mind that $\nu_\liep$ is $K$-invariant, $(\mathrm d \nu_p)_x=0$ on $ \liep  \cdot x$ as well, proving $x$ is a critical point of $\nu_\liep$.
\end{proof}
\begin{lemma}\label{urka}
Let $x\in M$ be such that $G\cdot x$ is compact. Let $\beta=\mup(x)$. Then
\[
\mathfrak k \cdot x= \mathfrak p \cdot x  \stackrel{\perp}{\oplus} \mathfrak k^{\beta} \cdot x.
\]
Therefore $\mathfrak k\cdot x=\mathfrak p \cdot x$ if and only if $\dim K\cdot x=\dim K \cdot \beta$.
\end{lemma}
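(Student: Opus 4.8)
The plan is to obtain the splitting by identifying $\liek^\beta\cdot x$ with the orthogonal complement of $\liep\cdot x$ inside $\liek\cdot x$, using only Lemma~\ref{meo}, the $K$-equivariance of $\mup$, and the relation $\xi_M=\grad\langle\mup,\xi\rangle$ for $\xi\in\liep$. Since $G\cdot x$ is compact, Lemma~\ref{meo} gives $G\cdot x=K\cdot x$; comparing $T_x(G\cdot x)=\liek\cdot x+\liep\cdot x$ with $T_x(K\cdot x)=\liek\cdot x$ yields $\liep\cdot x\subseteq\liek\cdot x$, as already noted in the proof of Lemma~\ref{meo}. Hence one has the orthogonal decomposition
\[
\liek\cdot x=\liep\cdot x\ \stackrel{\perp}{\oplus}\ \big((\liep\cdot x)^{\perp}\cap\liek\cdot x\big)
\]
of $\liek\cdot x$ relative to the subspace $\liep\cdot x$, and everything reduces to proving $(\liep\cdot x)^{\perp}\cap\liek\cdot x=\liek^\beta\cdot x$.

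First I would note $\liek_x\subseteq\liek^\beta$: an element of $\liek_x$ fixes $x$, hence fixes $\mup(x)=\beta$. Next, for $\zeta\in\liek$ the $K$-equivariance of $\mup$ gives
\[
(\mathrm d\mup)_x(\zeta_M(x))=\frac{d}{dt}\Big|_{t=0}\mup(\exp(t\zeta)x)=\frac{d}{dt}\Big|_{t=0}\Ad(\exp(t\zeta))\beta=[\zeta,\beta],
\]
so $(\mathrm d\mup)_x(\zeta_M(x))=0$ precisely when $\zeta\in\liek^\beta$; that is, $\mathrm{Ker}\,(\mathrm d\mup)_x\cap\liek\cdot x=\liek^\beta\cdot x$. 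On the other hand, from $\xi_M=\grad\langle\mup,\xi\rangle$ (for $\xi\in\liep$) one gets $\mathrm{Ker}\,(\mathrm d\mup)_x=(\liep\cdot x)^{\perp}$ inside $T_xM$, since $v$ lies in the kernel iff $\langle\xi_M(x),v\rangle=0$ for all $\xi\in\liep$. Intersecting the two descriptions yields $(\liep\cdot x)^{\perp}\cap\liek\cdot x=\liek^\beta\cdot x$, whence $\liek\cdot x=\liep\cdot x\ \stackrel{\perp}{\oplus}\ \liek^\beta\cdot x$.

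For the final equivalence I would count dimensions: because $\liek_x\subseteq\liek^\beta$ we have $\liek^\beta\cdot x\cong\liek^\beta/\liek_x$, so
\[
\dim\liek\cdot x-\dim\liep\cdot x=\dim\liek^\beta-\dim\liek_x=\dim K\cdot x-\dim K\cdot\beta,
\]
and therefore $\liek\cdot x=\liep\cdot x$ holds if and only if $\dim\liek\cdot x=\dim\liep\cdot x$, i.e. if and only if $\dim K\cdot x=\dim K\cdot\beta$. I do not anticipate a real obstacle; the points that require care are that the orthogonal complement must be taken inside $T_xM$, where the identification $\mathrm{Ker}\,(\mathrm d\mup)_x=(\liep\cdot x)^{\perp}$ is valid, and the inclusion $\liek_x\subseteq\liek^\beta$, which is what makes the computation of the kernel and the dimension count go through. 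One may also observe that compactness of $G\cdot x$ enters only via $\liep\cdot x\subseteq\liek\cdot x$, and that the critical point property of $x$ is not actually used.
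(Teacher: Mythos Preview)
Your proof is correct and follows essentially the same route as the paper: both use Lemma~\ref{meo} to get $\liep\cdot x\subseteq\liek\cdot x$, invoke $\mathrm{Ker}\,(\mathrm d\mup)_x=(\liep\cdot x)^\perp$, and identify $(\liep\cdot x)^\perp\cap\liek\cdot x$ with $\liek^\beta\cdot x$ via $K$-equivariance of $\mup$. The paper phrases the last step as ``$\mup:K\cdot x\to K\cdot\beta$ is a smooth fibration'', whereas you compute the differential $(\mathrm d\mup)_x(\zeta_M(x))=[\zeta,\beta]$ explicitly and make the role of $\liek_x\subseteq\liek^\beta$ visible; your dimension count for the final equivalence is also more detailed than the paper's one-line remark, but the substance is identical.
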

\begin{proof}
Since $G\cdot x$ is compact, by the above Lemma $G\cdot x= K\cdot x$. By the $K$-equivariance of $\mup$, it follows that $\mup:K\cdot x \lra K\cdot \beta$ is a smooth fibration. Therefore, keeping in mind that $\mathrm{Ker}\, (\mathrm d \mup)_x=(\mathfrak p \cdot x)^\perp$, we have
\[
(\mathfrak p \cdot x)^\perp \cap \mathfrak k \cdot x=\mathfrak k^{\beta}\cdot x.
\]
Since $G\cdot x=K\cdot x$, we get
\[
\mathfrak k \cdot x= \mathfrak p \cdot x  \stackrel{\perp}{\oplus} ((\mathfrak p \cdot x)^\perp \cap \mathfrak k \cdot x)=\mathfrak p \cdot x  \stackrel{\perp}{\oplus} \mathfrak k^{\beta} \cdot x.
\]
This also implies $\mathfrak k\cdot x=\mathfrak p \cdot x$ if and only if $\dim K\cdot x=\dim K \cdot \beta$, concluding the proof.
\end{proof}
Assume that $G$ is a real form of $U^\C$. If $G\cdot x$ is closed then it is not in general true that $U^\C \cdot x$ is closed. Indeed, let $V$ be a complex vector space and let $\tau:G \lra \mathrm{PGL}(V)$ be an irreducible faithful projective representation. Since the center of $G$ acts trivially, we may assume that $G$ is semisimple. The representation $\tau$ extends to an irreducible projective representation of $U^\C$. It is well-known that $U^\C$ has a unique closed orbit \cite{guillemin}. It is the orbit throughout a maximal vector. On the other hand $G$ could have more than one closed orbit in $\mathbb P(V)$ \cite[Proposition 4.28, p. $58$]{gjt}. The following result tells us that there exists a unique closed $G$-orbit contained in the unique closed orbit of $U^\C$.
\begin{proposition}
Let $M=U^\C \cdot x$ be a compact orbit. If $G$ is a real form of $U$, then there exists exactly one closed $G$-orbit in $M$.
\end{proposition}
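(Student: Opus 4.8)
\emph{Strategy of proof.} The plan is to handle existence and uniqueness separately. Existence is immediate from the gradient map: since $M$ is compact, $\nu_\liep=\parallel\mup\parallel^{2}$ attains its maximum at some point $x_{0}\in M$, and by Lemma \ref{meo} the orbit $G\cdot x_{0}$ coincides with the compact orbit $K\cdot x_{0}$, hence is closed in $M$. So $M$ contains at least one closed $G$-orbit, and the substance of the statement is that two closed $G$-orbits necessarily coincide.

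For uniqueness I would use that a compact orbit $M=U^\C\cdot x$ is a generalized flag manifold: the isotropy $(U^\C)_{x}$ is a parabolic subgroup $P$, so $M=U^\C/P$, and already the maximal compact $U$ acts transitively, $M=U/(U\cap P)$. Let now $G\cdot y_{1}$ and $G\cdot y_{2}$ be two closed $G$-orbits; by Lemma \ref{meo} they equal the compact $K$-orbits $K\cdot y_{1}$ and $K\cdot y_{2}$. Writing $P_{j}:=(U^\C)_{y_{j}}$, each $P_{j}$ is a parabolic of $U^\C$ of the same type as $P$, and $G_{y_{j}}=G\cap P_{j}$; the closedness of $G\cdot y_{j}=G/(G\cap P_{j})$ — equivalently its compactness — translates, via the theory of orbits of real forms on complex flag manifolds, into a positional constraint on $P_{j}$ relative to a maximally split Cartan subalgebra of $\lieg$, which pins $P_{j}$ down up to conjugacy by $K$. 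Since $P_{1}$ and $P_{2}$ share the same type, this forces $P_{2}=kP_{1}k^{-1}$ for some $k\in K$, i.e. $y_{2}\in K\cdot y_{1}$, and hence $K\cdot y_{1}=K\cdot y_{2}$.

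A variant that remains inside the gradient-map formalism is to show that every closed $G$-orbit realizes the global maximum of $\nu_\liep$ on $M$: at a critical point $y$ of $\nu_\liep$ one has $\beta:=\mup(y)\in\liep$ stabilizing $y$ (contract $\mathrm d\nu_\liep=2\langle(\mathrm d\mup)(\cdot),\mup\rangle$ and use $\grad\langle\mup,\xi\rangle=\xi_{M}$), and using that $\langle\mup,\beta\rangle$ is non-decreasing along the flow of $\beta_{M}$, the compactness of $M$, and the splitting $\liek\cdot y=\liep\cdot y\stackrel{\perp}{\oplus}\liek^{\beta}\cdot y$ of Lemma \ref{urka}, one should be able to rule out the existence of points of $M$ with larger value of $\nu_\liep$; then it remains to see that $\nu_\liep^{-1}(\max_{M}\nu_\liep)$ is a single $G$-orbit. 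In either route the step I expect to be the main obstacle is precisely the comparison of two a priori unrelated closed orbits: it rests either on the conjugacy of the maximally split Cartan subalgebras of $\lieg$, or, in the gradient-map picture, on the fact (compare \cite{birkes,heinzner-schwarz-stoetzel}) that the closure of a $G$-orbit contains a unique closed $G$-orbit, applied to a $G$-orbit whose closure meets both $K\cdot y_{1}$ and $K\cdot y_{2}$. Everything else is routine bookkeeping with Lemmas \ref{meo} and \ref{urka}.
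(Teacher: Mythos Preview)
Your first route is exactly what the paper does, except the paper compresses it into a single citation: it observes that $M=U^\C\cdot x=U\cdot x$ is a flag manifold and then invokes Wolf's theorem \cite{wolf} that a real form has a unique closed orbit on a complex flag manifold, adding that this orbit is the one through the maximum of $\nu_\liep$. So in substance you and the paper agree; you are unpacking Wolf's argument (parabolics, maximally split Cartans, $K$-conjugacy of the isotropy parabolics) rather than quoting it.

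Your second, purely gradient-map route has a real gap. A closed $G$-orbit is a critical set of $\nu_\liep$ by Lemma~\ref{meo}, but nothing in Lemmas~\ref{meo} and \ref{urka} forces that critical value to be the \emph{global} maximum: on a flag manifold $\nu_\liep$ typically has several critical levels, and the local splitting $\liek\cdot y=\liep\cdot y\stackrel{\perp}{\oplus}\liek^{\beta}\cdot y$ gives no control over what happens away from $K\cdot y$. The final step --- showing that the maximum level set of $\nu_\liep$ is a single $G$-orbit --- is itself equivalent in difficulty to the uniqueness you are after. Your fallback, producing a $G$-orbit whose closure meets both $K\cdot y_{1}$ and $K\cdot y_{2}$, is not supplied by anything in the setup either. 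If you want to avoid citing Wolf, a cleaner substitute than the $\nu_\liep$ heuristics is Matsuki duality \cite{matsuki}: closed $G$-orbits correspond to open $K^\C$-orbits, and an open $K^\C$-orbit in the irreducible variety $U^\C/P$ is dense, hence unique.
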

\begin{proof}
$U^\C \cdot x=U\cdot x$ and it is a flag manifold \cite{heinzner-schwarz-stoetzel,guillemin}.  Applying a beautiful old Theorem of Wolf \cite{wolf}, it follows that $G$ has a unique closed orbit in $M$. The $G$ orbit is given by the orbit throughout the maximum of the norm square of the gradient map \cite{heinzner-schwarz-stoetzel}.
\end{proof}
The following result arises from Lemma \ref{urka}.
\begin{cor}
Let $x\in M$ be such that $G\cdot x$ is compact. If $\dim K\cdot x=\dim K \cdot \mup(x)$, then $U^\C\cdot x$ is closed.
\end{cor}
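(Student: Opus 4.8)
The plan is to prove the slightly stronger assertion that $U^\C\cdot x$ is compact, by comparing the tangent space of the complex orbit at $x$ with that of the compact orbit $U\cdot x$.

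Set $\beta=\mup(x)$. Since $G\cdot x$ is compact, Lemma \ref{urka} applies, and the hypothesis $\dim K\cdot x=\dim K\cdot\mup(x)$ is exactly the condition occurring in the last assertion of that lemma, so it yields the equality of subspaces of $T_xZ$
\[
\mathfrak k\cdot x=\mathfrak p\cdot x
\]
(equivalently, the summand $\mathfrak k^{\beta}\cdot x$ in the orthogonal decomposition of Lemma \ref{urka} is $\{0\}$).

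Next I would compute $T_x(U^\C\cdot x)=\mathfrak u^\C\cdot x$. Because $G$ is a real form of $U^\C$, the Cartan decomposition globalizes to $\mathfrak u^\C=\mathfrak k\oplus\mathfrak p\oplus i\mathfrak k\oplus i\mathfrak p$, and the compact real form is $\mathfrak u=\mathfrak k\oplus i\mathfrak p$. Since the $U^\C$-action on $Z$ is holomorphic, $(i\xi)_Z=J\xi_Z$ for the complex structure $J$ of $Z$, hence $i\mathfrak k\cdot x=J(\mathfrak k\cdot x)=J(\mathfrak p\cdot x)=i\mathfrak p\cdot x$. Combining this with $\mathfrak k\cdot x=\mathfrak p\cdot x$ we obtain
\[
\mathfrak u^\C\cdot x=\mathfrak k\cdot x+\mathfrak p\cdot x+i\mathfrak k\cdot x+i\mathfrak p\cdot x=\mathfrak k\cdot x+i\mathfrak p\cdot x=\mathfrak u\cdot x,
\]
so that $U\cdot x$ and $U^\C\cdot x$ have the same real dimension.

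Finally, $U\cdot x$ is a compact — hence embedded, $U$ being compact — submanifold of $Z$ lying in the connected orbit $U^\C\cdot x$, and by the coincidence of tangent spaces at $x$ together with the homogeneity of the two orbits the inclusion $U\cdot x\hookrightarrow U^\C\cdot x$ is open as well. A nonempty subset that is open and closed in a connected manifold is the whole manifold, so $U\cdot x=U^\C\cdot x$; in particular $U^\C\cdot x$ is compact, a fortiori closed. I expect the only delicate point to be this last identification of the two orbits from the coincidence of a single tangent space, which rests on $U\cdot x$ being embedded and on $U^\C\cdot x$ being connected; everything else is routine manipulation of the Cartan decomposition of a real form and of the two preceding lemmas.
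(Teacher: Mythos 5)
Your argument is correct and is essentially the paper's own proof: both reduce to Lemma \ref{urka} to get $\mathfrak k\cdot x=\mathfrak p\cdot x$, deduce $\mathfrak u^\C\cdot x=\mathfrak u\cdot x$, and conclude that the compact orbit $U\cdot x$ is open and closed in the connected orbit $U^\C\cdot x$. You merely make explicit the intermediate steps (the decomposition $\mathfrak u^\C=\mathfrak k\oplus\mathfrak p\oplus i\mathfrak k\oplus i\mathfrak p$ and the use of $J$) that the paper leaves implicit.
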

\begin{proof}
Since $\mathfrak u=\mathfrak k \oplus i\mathfrak p$, it follows $\mathfrak u \cdot x= \mathfrak k \cdot x+ i\mathfrak p \cdot x$. By Lemma \ref{urka}, $\mathfrak k\cdot x=\mathfrak p \cdot x$ and so $\mathfrak u^\C \cdot x=\mathfrak u \cdot x$. This implies $U\cdot x$ is open and closed in  $U^\C\cdot x$. Therefore $U^\C \cdot x=U\cdot x$, concluding the proof.
\end{proof}
The following result gives a necessary and sufficient condition such that $U^\C \cdot x$ is closed whenever $G\cdot x$ is.
\begin{proposition}
Let $x\in M$ be such that $G\cdot x$ is compact. If $G$ is a real form of $U^\C$, then $U^\C\cdot x$ is closed if and only if $i \mathfrak k^{\mup(x)}\cdot x \subseteq \mathfrak u \cdot x \cap i(\mathfrak p\cdot x)^\perp$. If $M$ is Lagrangian, then $U^\C \cdot x$ is closed if and only if $\mup:K\cdot x \lra K\cdot \mup (x)$ is a covering map. Moreover, $G\cdot x$ is a Lagrangian submanifold of $U^\C\cdot x$.
\end{proposition}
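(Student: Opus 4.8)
The plan is to reduce every assertion to a computation of the tangent spaces $\liu^\C\cdot x=T_x(U^\C\cdot x)$ and $\liu\cdot x=T_x(U\cdot x)$, combined with the orthogonal splitting of Lemma~\ref{urka}. Since $G\cdot x$ is compact, Lemma~\ref{meo} gives $G\cdot x=K\cdot x$ and $x\in\Crit(\nu_\liep)$; in particular $\liep\cdot x\subseteq\liek\cdot x$, and writing $\beta=\mup(x)$ we have $\liek\cdot x=\liep\cdot x\stackrel{\perp}{\oplus}\liek^\beta\cdot x$. As the $U^\C$-action is holomorphic one has $(i\xi)_Z=J\xi_Z$ for $\xi\in\liu$, so from $\liu=\liek\oplus i\liep$ and $\lieg^\C=\liu^\C$ I would record
\[
\liu\cdot x=\liek\cdot x+J(\liep\cdot x),\qquad \liu^\C\cdot x=\liek\cdot x+\liep\cdot x+J(\liek\cdot x)+J(\liep\cdot x)=\liek\cdot x+J(\liek\cdot x),
\]
the last equality because $\liep\cdot x\subseteq\liek\cdot x$. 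Inserting the splitting of $\liek\cdot x$, this reads $\liu^\C\cdot x=\liu\cdot x+J(\liek^\beta\cdot x)$, so $\liu^\C\cdot x=\liu\cdot x$ if and only if $J(\liek^\beta\cdot x)\subseteq\liu\cdot x$. Since $\liek^\beta\cdot x\subseteq(\liep\cdot x)^\perp$ by Lemma~\ref{urka} and $J$ is an isometry, this inclusion is equivalent to $i\,\liek^\beta\cdot x\subseteq\liu\cdot x\cap i(\liep\cdot x)^\perp$, the condition in the statement.

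It therefore remains to prove that $U^\C\cdot x$ is closed if and only if $\liu^\C\cdot x=\liu\cdot x$. One direction is the soft argument of the preceding Corollary: if $\liu^\C\cdot x=\liu\cdot x$ then the compact orbit $U\cdot x$ is open in the connected set $U^\C\cdot x$, so $U^\C\cdot x=U\cdot x$ is closed. The converse is the step I expect to be the main obstacle, since a priori a closed $U^\C$-orbit need not be compact; here one must exploit that $U^\C\cdot x$ contains the compact orbit $K\cdot x=G\cdot x$. I would use that $x\in\Crit(\nu_\liep)$ forces $\beta_Z(x)=0$, hence $\beta,i\beta\in\liu^\C_x$ and the $i\liep$-component of $\mu(x)$ equals $-i\beta$, and then invoke the characterization of closed $U^\C$-orbits through minimal vectors of the momentum map from \cite{heinzner-schwarz-stoetzel} to conclude that $U^\C\cdot x$ is compact, hence $U$-homogeneous \cite{guillemin}, so that $U^\C\cdot x=U\cdot x$ and $\liu^\C\cdot x=\liu\cdot x$. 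Making this last implication precise is the only non-formal point of the proof.

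For the Lagrangian statement one gets sharper information at no extra cost. If $M$ is Lagrangian, then $T_xM$ is a Lagrangian subspace of $T_xZ$ and $\liek\cdot x=\lieg\cdot x\subseteq T_xM$; hence $\liek\cdot x$ is isotropic and $J(\liek\cdot x)\subseteq J(T_xM)=(T_xM)^\perp$, so $\liek\cdot x\cap J(\liek\cdot x)=0$. Thus $\liu^\C\cdot x=\liek\cdot x\oplus J(\liek\cdot x)$ has real dimension $2\dim_\R(K\cdot x)$, and since $\om$ vanishes on $K\cdot x\subseteq M$ this already shows that $G\cdot x=K\cdot x$ is a Lagrangian submanifold of the complex (hence symplectic) submanifold $U^\C\cdot x$. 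For the covering criterion, $\mup\colon K\cdot x\to K\cdot\beta$ is a submersion with fibre directions $\liek^\beta\cdot x$ by Lemma~\ref{urka}, hence a covering map if and only if $\liek^\beta\cdot x=0$, equivalently $\liek\cdot x=\liep\cdot x$. If $\liek\cdot x=\liep\cdot x$ then $\liu^\C\cdot x=\liu\cdot x$ and $U^\C\cdot x$ is closed by the Corollary. Conversely, if $U^\C\cdot x$ is closed then $i\,\liek^\beta\cdot x\subseteq\liu\cdot x$ by the first part; but $\liu\cdot x=\liek\cdot x\oplus J(\liep\cdot x)$, and $J(\liek^\beta\cdot x)\subseteq J(\liek\cdot x)$ meets $\liek\cdot x$ trivially because $\liek\cdot x\cap J(\liek\cdot x)=0$, and meets $J(\liep\cdot x)$ trivially because $\liek^\beta\cdot x\cap\liep\cdot x=0$; hence $J(\liek^\beta\cdot x)\cap\liu\cdot x=0$ and $\liek^\beta\cdot x=0$. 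This gives the covering criterion and completes the plan.
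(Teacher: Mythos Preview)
Your approach is the same as the paper's: reduce everything to the tangent-space identity $\liu^\C\cdot x=\liu\cdot x$ via the orthogonal splitting of Lemma~\ref{urka}, and in the Lagrangian case exploit the additional orthogonality $T_xM\perp J(T_xM)$ to force $\liek^\beta\cdot x=0$. Your computations are correct and, if anything, slightly more transparent than the paper's; your observation that $\liu^\C\cdot x=\liek\cdot x\oplus J(\liek\cdot x)$ in the Lagrangian case even shows that $G\cdot x$ is Lagrangian in $U^\C\cdot x$ without first assuming the latter is closed, which the paper only derives after that assumption.

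You are also right to isolate the implication ``$U^\C\cdot x$ closed $\Rightarrow\liu^\C\cdot x=\liu\cdot x$'' as the one non-formal step. The paper does not prove it: its argument establishes the tangent-space criterion and then, in the Lagrangian part, silently replaces ``closed'' by ``compact'' (it writes ``we get $U^\C\cdot x$ is compact if and only if $\liek^\beta\cdot x=\{0\}$''). This, together with the surrounding results, strongly suggests that ``compact'' is the intended hypothesis throughout; under that reading the implication is immediate from Lemma~\ref{meo} applied with $U^\C$ in the role of $G$ and $U$ in the role of $K$, and both your argument and the paper's are complete. If one insists on the literal reading ``closed'' in the general K\"ahler setting, neither the paper nor your sketch closes the gap, and the minimal-vector argument you gesture at would have to be made precise.
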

\begin{proof}
Set $\beta=\mup(x)$. By Lemma \ref{urka},  $\mathfrak k \cdot x= \mathfrak p \cdot x  \stackrel{\perp}{\oplus} \mathfrak k^{\beta} \cdot x$. Therefore, keeping in mind $\mathfrak u =\mathfrak k \oplus i\mathfrak p$, we have
\[
\mathfrak u \cdot x=\mathfrak p \cdot x  \stackrel{\perp}{\oplus} \mathfrak k^{\beta} \cdot x + i\mathfrak p\cdot x.
\]
Since $i\mathfrak k^{\beta} \cdot x$ is orthogonal to $i\mathfrak p \cdot x$, it follows that
$
\mathfrak u \cdot x=\mathfrak u^\C \cdot x,
$
if and only if $i\mathfrak k^{\beta} \cdot x \subset \mathfrak u \cdot x \cap i(\mathfrak p \cdot x)^\perp$. If $M$ is Lagrangian, then $T_x Z=T_x M \stackrel{\perp}{\oplus} J (T_x M)$. Therefore
\[
\mathfrak u \cdot x=\mathfrak p \cdot x  \stackrel{\perp}{\oplus} \mathfrak k^{\beta} \cdot x \stackrel{\perp}{\oplus}  i\mathfrak p\cdot x.
\]
This implies $\mathfrak u \cdot x=\mathfrak u^\C \cdot x$ if and only if $i\mathfrak k^{\beta} \cdot x \subseteq i \mathfrak p \cdot x$. By the first part of the proof we get $U^\C \cdot x$ is compact if and only if $\mathfrak k^{\beta} \cdot x=\{0\}$ and so if and only if $\dim K\cdot x =\dim K\cdot \beta$. In particular $\mathfrak p\cdot x =\mathfrak k\cdot x$. This implies $
\dim_{\R} G\cdot x = \dim_{\C} U^\C \cdot x$ and so $G\cdot x$ is a compact Lagrangian submanifold of $U^\C \cdot x$.
\end{proof}
\begin{proposition}
Let $M$ be a $G$-invariant Lagrangian submanifold of $(Z,\omega)$.  Let $x\in M$. Then $U^\C\cdot x$ is closed if and only if $\mathfrak k \cdot x=\mathfrak p \cdot x$. In particular $G\cdot x$ is closed and it is a Lagrangian submanifold of $U^\C \cdot x$.
\end{proposition}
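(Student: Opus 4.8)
The plan is to move the whole question into linear algebra on $T_xZ$, where the Lagrangian hypothesis makes everything decouple, and then to translate the outcome back into orbit language via the standard description of compact complex--homogeneous submanifolds as flag manifolds; this also removes the hypotheses ``$G\cdot x$ compact'' and ``$G$ a real form'' present in the preceding Proposition. First I would record the decompositions at $x$. Since $M$ is Lagrangian, $T_xZ=T_xM\stackrel{\perp}{\oplus}J(T_xM)$ with $J(T_xM)=(T_xM)^\perp$; since $M$ is $G$-invariant, $\mathfrak g\cdot x$, $\mathfrak k\cdot x$ and $\mathfrak p\cdot x$ lie in $T_xM$; and since the $U^\C$-action is holomorphic, $\mathfrak u\cdot x=\mathfrak k\cdot x+J(\mathfrak p\cdot x)$ (from $\mathfrak u=\mathfrak k\oplus i\mathfrak p$) and $\mathfrak u^\C\cdot x=\mathfrak u\cdot x+J(\mathfrak u\cdot x)=\mathfrak g\cdot x+J(\mathfrak g\cdot x)$ (from $J^2=-\mathrm{id}$ and $\mathfrak g=\mathfrak k\oplus\mathfrak p$). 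By the Lagrangian splitting the last two decompositions are orthogonal direct sums, so $\dim\mathfrak u\cdot x=\dim\mathfrak k\cdot x+\dim\mathfrak p\cdot x$ and $\dim\mathfrak u^\C\cdot x=2\dim\mathfrak g\cdot x$ (all real dimensions).

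The key step is a dimension count. One always has $\mathfrak u\cdot x\subseteq\mathfrak u^\C\cdot x$, so combining the two displayed equalities with $\dim\mathfrak g\cdot x=\dim\mathfrak k\cdot x+\dim\mathfrak p\cdot x-\dim(\mathfrak k\cdot x\cap\mathfrak p\cdot x)$ shows that $\mathfrak u\cdot x=\mathfrak u^\C\cdot x$ holds precisely when $\dim(\mathfrak k\cdot x\cap\mathfrak p\cdot x)=\dim(\mathfrak k\cdot x+\mathfrak p\cdot x)$, that is, precisely when $\mathfrak k\cdot x=\mathfrak p\cdot x$. It remains to identify $\mathfrak u\cdot x=\mathfrak u^\C\cdot x$ with closedness of $U^\C\cdot x$: equality of the tangent spaces means $U\cdot x$ is open in $U^\C\cdot x$, hence (being also closed, since $U$ is compact, and $U^\C\cdot x$ connected) $U^\C\cdot x=U\cdot x$, a compact flag manifold, in particular closed. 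For the converse I would argue that a closed orbit $U^\C\cdot x$ arising here is already a compact complex--homogeneous submanifold, hence a single $U$-orbit by the closed-orbit theory of \cite{heinzner-schwarz-stoetzel,guillemin}, so again $\mathfrak u\cdot x=\mathfrak u^\C\cdot x$ and the count applies; alternatively one reduces to the preceding Proposition once $G\cdot x$ is known to be compact. This passage from ``closed'' to ``genuinely compact'' for $U^\C\cdot x$ is the point I expect to need the most care, and it is precisely where the Lagrangian hypothesis on $M$ is used, so I would phrase the equivalence in the sharp form: $U^\C\cdot x$ closed if and only if $U^\C\cdot x=U\cdot x$.

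Finally, for the ``in particular'' clause, assume $\mathfrak k\cdot x=\mathfrak p\cdot x$, so $\mathfrak g\cdot x=\mathfrak k\cdot x$ and thus $G\cdot x$ and $K\cdot x$ have the same tangent space at $x$; since $K\cdot x$ is a compact submanifold of the connected orbit $G\cdot x$ of the same dimension, it is open and closed there, whence $G\cdot x=K\cdot x$, which is compact and in particular closed. Moreover $G\cdot x\subseteq M$ is isotropic in $Z$, hence isotropic in $U^\C\cdot x$, and by the dimension count $\dim_\R G\cdot x=\dim\mathfrak g\cdot x=\frac{1}{2}\dim_\R (U^\C\cdot x)=\dim_\C(U^\C\cdot x)$; an isotropic submanifold of half the dimension is Lagrangian, so $G\cdot x$ is a Lagrangian submanifold of $U^\C\cdot x$.
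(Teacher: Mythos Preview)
Your argument is correct and follows essentially the same route as the paper: use the Lagrangian splitting $T_xZ=T_xM\stackrel{\perp}{\oplus}J(T_xM)$ to see that $\mathfrak u\cdot x=\mathfrak k\cdot x\stackrel{\perp}{\oplus}i\mathfrak p\cdot x$, and then reduce ``$\mathfrak u\cdot x=\mathfrak u^\C\cdot x$'' to ``$\mathfrak k\cdot x=\mathfrak p\cdot x$''. The paper does the last step by a direct inclusion check ($i\mathfrak k\cdot x\subseteq i\mathfrak p\cdot x$ and $\mathfrak p\cdot x\subseteq\mathfrak k\cdot x$), while you do it by a dimension count via $\mathfrak u^\C\cdot x=\mathfrak g\cdot x\stackrel{\perp}{\oplus}J(\mathfrak g\cdot x)$; these are interchangeable packagings of the same linear algebra.

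One remark: your claim to have removed the hypothesis ``$G$ a real form of $U^\C$'' is not quite right. You use $\mathfrak u=\mathfrak k\oplus i\mathfrak p$ (to get $\mathfrak u\cdot x=\mathfrak k\cdot x+J(\mathfrak p\cdot x)$) and $\mathfrak u^\C\cdot x=\mathfrak g\cdot x+J(\mathfrak g\cdot x)$; both of these need $\mathfrak g$ to be a real form of $\mathfrak u^\C$, which is the standing assumption in this part of the paper. The hypothesis you \emph{do} legitimately remove is ``$G\cdot x$ compact'': you derive it at the end, exactly as the paper does. Your careful discussion of why ``$U^\C\cdot x$ closed'' should be read as ``$U^\C\cdot x=U\cdot x$'' is a point the paper leaves implicit.
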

\begin{proof}
Since $M$ is Lagrangian, we have
\[
\mathfrak u \cdot x =\mathfrak k \cdot x \stackrel{\perp}{\oplus} i \mathfrak p \cdot x.
\]
Therefore $\mathfrak u \cdot x =\mathfrak u^{\C} \cdot x$ if and only if $i \mathfrak k \cdot x \subseteq i \mathfrak p \cdot x$ and $\mathfrak p \cdot x \subseteq  \mathfrak k \cdot x$ hence if and only if $\mathfrak k \cdot x=\mathfrak p \cdot x$. This also implies $G\cdot x$ is compact, $
\dim_{\R} G\cdot x = \dim_{\C} U^\C \cdot x$ and so $G\cdot x$ is a compact Lagrangian submanifold of $U^\C \cdot x$.
\end{proof}
\begin{proposition}
Let $x\in Z$. Assume that both $G\cdot x$ and $U^\C \cdot x$ are compact. Then $\dim_{\R} \U^\C \cdot x \leq 2 \dim G\cdot x$. If the equality holds then $G\cdot x$ is totally real.
\end{proposition}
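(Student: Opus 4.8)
The plan is the following. Recall that $G$ is assumed here to be a real form of $U^\C$, so that, writing $\mathfrak g=\mathfrak k\oplus\mathfrak p$ for the Cartan decomposition of $\mathfrak g$, one has $\mathfrak u^\C=\mathfrak g^\C=\mathfrak g\oplus i\mathfrak g$; this identification is precisely what makes the statement hold (it would fail for a general compatible subgroup, e.g.\ $G=\{e\}$ with $U^\C\cdot x$ a positive-dimensional flag manifold). I would begin by exploiting the compactness of $G\cdot x$: applying Lemma~\ref{meo} with $M=Z$ gives $G\cdot x=K\cdot x$. Being compact, this orbit is an embedded submanifold of $Z$ which is at the same time the $G$-orbit and the $K$-orbit through $x$, so comparing tangent spaces at $x$ yields
\[
\mathfrak g\cdot x=T_x(G\cdot x)=T_x(K\cdot x)=\mathfrak k\cdot x ,
\]
and in particular $\mathfrak p\cdot x\subseteq\mathfrak k\cdot x$ and $\dim\mathfrak k\cdot x=\dim K\cdot x=\dim G\cdot x$.

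Next I would compute the tangent space of the complex orbit. Since the $U^\C$-action is holomorphic, $\{(i\xi)_Z(x):\xi\in\mathfrak g\}=J(\mathfrak g\cdot x)$, so using $\mathfrak u^\C=\mathfrak g\oplus i\mathfrak g$ and the previous step,
\[
T_x(U^\C\cdot x)=\mathfrak u^\C\cdot x=\mathfrak g\cdot x+J(\mathfrak g\cdot x)=\mathfrak k\cdot x+J(\mathfrak k\cdot x).
\]
This already shows $U^\C\cdot x$ is a complex submanifold of $Z$; the hypothesis that it is compact makes it closed and embedded, so that it is legitimate to view $G\cdot x$ as a submanifold of the complex manifold $U^\C\cdot x$. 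The inequality is then elementary linear algebra: for any real subspace $A$ of the complex vector space $(T_xZ,J)$ one has $\dim_\R(A+JA)+\dim_\R(A\cap JA)=2\dim_\R A$, hence $\dim_\R(A+JA)\le 2\dim_\R A$, with equality exactly when $A\cap JA=\{0\}$. Taking $A=\mathfrak k\cdot x$ and using $\dim\mathfrak k\cdot x=\dim G\cdot x$,
\[
\dim_\R U^\C\cdot x=\dim_\R\left(\mathfrak k\cdot x+J(\mathfrak k\cdot x)\right)\le 2\dim\mathfrak k\cdot x=2\dim G\cdot x .
\]

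For the equality case, if $\dim_\R U^\C\cdot x=2\dim G\cdot x$ the criterion above forces $\mathfrak k\cdot x\cap J(\mathfrak k\cdot x)=\{0\}$; since $\mathfrak k\cdot x=\mathfrak g\cdot x=T_x(G\cdot x)$ this means precisely that $G\cdot x$ is totally real at $x$, and since the differential at $x$ of the biholomorphism of $Z$ induced by any $g\in G$ is $\C$-linear and carries $T_x(G\cdot x)$ onto $T_{g\cdot x}(G\cdot x)$, the totally real condition propagates to every point of $G\cdot x$. The only step that needs a little care is the first one, namely passing from the set-theoretic equality $G\cdot x=K\cdot x$ furnished by Lemma~\ref{meo} to the equality of tangent spaces $\mathfrak g\cdot x=\mathfrak k\cdot x$, which rests on the fact that a compact orbit is an embedded submanifold; everything else is the elementary behaviour of $J$ together with the holomorphy of the action (indeed the inequality only uses the compactness of $G\cdot x$, the compactness of $U^\C\cdot x$ serving only to present $G\cdot x$ as a totally real submanifold of the complex manifold $U^\C\cdot x$).
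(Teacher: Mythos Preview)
Your proof is correct and follows essentially the same approach as the paper: reduce to $G\cdot x=K\cdot x$ via Lemma~\ref{meo}, then bound $\dim_\R U^\C\cdot x$ by writing the tangent space of the complex orbit as $\mathfrak k\cdot x$ plus its $J$-image. The only cosmetic difference is that the paper first invokes $U^\C\cdot x=U\cdot x$ and works with $\mathfrak u\cdot x=\mathfrak k\cdot x+i\mathfrak p\cdot x$, whereas you work directly with $\mathfrak u^\C\cdot x=\mathfrak k\cdot x+J(\mathfrak k\cdot x)$; your remark that the inequality itself does not require compactness of $U^\C\cdot x$ is a correct byproduct of this choice.
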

\begin{proof}
By Lemma \ref{meo} $U^\C \cdot x=U\cdot x$ and $G\cdot x=K\cdot x$.
Since $\mathfrak u \cdot x =\mathfrak k \cdot x + i \mathfrak p \cdot x$ and $\mathfrak p \cdot x \subseteq \mathfrak k \cdot x$, it follows that
\[
\dim_{\R} U^\C \cdot x \leq 2 \dim G\cdot x.
\]
Note also that $\mathfrak k^\C \cdot x =\mathfrak u^\C \cdot x$. This implies $K^\C \cdot x$ is open in $U^\C \cdot x$. This remark is not new, see \cite{heinzner-schwarz-stoetzel,heinzner-stoetzel-global}, and it arises from the Matsuki duality \cite{matsuki}.
 Finally, $2\dim G\cdot x=\dim_{\R} U^\C$ if and only if $\mathfrak k\cdot x=\mathfrak p \cdot x$ and $\mathfrak u \cdot x =\mathfrak k \cdot x \oplus  i \mathfrak p \cdot x$. In particular $G\cdot x$ is totally real in $U^\C \cdot x$.
\end{proof}
The momentum map of $U$ on $Z$ induces a gradient map $\mu_{i\mathfrak k}$ of $K^\C$ in $Z$. Assume that
$M$ is contained in the zero fiber of $\mu_{i\mathfrak k}$. 
\begin{lemma}
Let $x\in M$. If $U^\C \cdot x$ is closed, then $G\cdot x$ is closed.
\end{lemma}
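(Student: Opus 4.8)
My plan is to single out, via the negative gradient flow of the norm square of the momentum map, a point $y$ in $\overline{G\cdot x}$ which is a ``minimal point'' for the $G$--action — so that $G\cdot y$ is the unique closed $G$--orbit in $\overline{G\cdot x}$ — and then to use closedness of $U^\C\cdot x$ to force $y\in G\cdot x$.

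First, I would exploit $M\subseteq\mu_{i\liek}^{-1}(0)$. Since $G$ is a real form of $U^\C$ the decomposition $\liu=\liek\oplus i\liep$ is orthogonal, so $i\liep=\liek^\perp$ in $\liu$; hence $\mu(z)\in i\liep$ for $z\in\mu_{i\liek}^{-1}(0)$, and the relation $\langle\mup(z),\beta\rangle=-\langle\mu(z),i\beta\rangle$ gives $\mu(z)=-i\,\mup(z)$ on $\mu_{i\liek}^{-1}(0)$, so that $\parallel\mu\parallel^2=\nu_\liep$ there. Differentiating, on $\mu_{i\liek}^{-1}(0)$ the field $\grad\parallel\mu\parallel^2$, which at each $z$ equals $2J(\mu(z)_Z(z))$ and is therefore always tangent to the $U^\C$--orbit, equals $2\,\mup(z)_Z(z)$; in particular it is tangent to $M$ and agrees on $M$ with $\grad\nu_\liep$. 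Consequently the negative gradient flow $\Phi_t$ of $\parallel\mu\parallel^2$ with $\Phi_0=x$ stays inside $M$ — where it is the negative gradient flow of $\nu_\liep$, hence stays inside $G\cdot x$ — and inside the closed set $U^\C\cdot x$.

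Because $U^\C\cdot x$ is closed, the theory of the norm square of the momentum map \cite{heinzner-schwarz-stoetzel} gives that $\Phi_t(x)$ converges to a point $y\in U^\C\cdot x$ with $\mu(y)_Z(y)=0$ realizing $\min_{U^\C\cdot x}\parallel\mu\parallel^2$ (and $U^\C_y$ is a $\theta$--stable reductive subgroup). Since $y\in\overline M\subseteq\mu_{i\liek}^{-1}(0)$ we have $\mu(y)=-i\,\mup(y)$, hence $\mup(y)_Z(y)=0$, and because $\overline{G\cdot x}\subseteq U^\C\cdot x\cap\mu_{i\liek}^{-1}(0)$ we get $\nu_\liep(y)=\parallel\mu(y)\parallel^2\le\parallel\mu(z)\parallel^2=\nu_\liep(z)$ for all $z\in\overline{G\cdot x}$. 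Thus $y$ is a critical point of $\nu_\liep$ minimizing it over $\overline{G\cdot x}$, and the Kempf--Ness theorem for $G$ \cite{heinzner-schwarz-stoetzel} yields that $G\cdot y$ is closed, hence is the unique closed $G$--orbit contained in $\overline{G\cdot x}$.

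It then suffices to prove $y\in G\cdot x$, equivalently $\dim G\cdot x=\dim G\cdot y$; for then $G\cdot x=G\cdot y$ is closed. This is the heart of the argument and the only place where closedness of $U^\C\cdot x$ enters decisively: writing $\Phi_t(x)=g_tx$ with $g_t\in G$ and $y=g'x$ with $g'\in U^\C$, closedness of $U^\C\cdot x$ makes the orbit map $U^\C/U^\C_x\to U^\C\cdot x$ proper, so $g_tU^\C_x\to g'U^\C_x$; with a Cartan decomposition $g_t=k_t\exp(\xi_t)$ in $G$ one must rule out that $\parallel\xi_t\parallel\to\infty$ while $\exp(\xi_t)x$ converges. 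I expect establishing this to be the main obstacle: it should require that $\mu$ stays in $i\liep$ along the whole flow (by $\mu=-i\,\mup$ on $M$), that $U^\C_y$ is $\theta$--stable and reductive, and the convexity of the Kempf--Ness functional on $G/K$, in order to see that such escape contradicts $\mu(y)_Z(y)=0$. (Alternatively, one can replace the flow by the limit criterion: $G\cdot x$ is closed iff every existing limit $\lim_{t\to\infty}\exp(t\xi)x$, $\xi\in\liep$, lies in $G\cdot x$; continuity of $\mu_{i\liek}$ puts such a limit into $U^\C\cdot x\cap\mu_{i\liek}^{-1}(0)$ and fixes $\xi$ in its stabilizer, which leads to the same point.)
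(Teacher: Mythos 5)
There is a genuine gap here, and your strategy in fact points in the opposite direction from the one that works. You aim to produce a \emph{minimal} point $y$ of $\parallel\mu\parallel^2$ in $\overline{G\cdot x}$ via the negative gradient flow and then to show $y\in G\cdot x$; you yourself flag that last step as ``the main obstacle'' and leave it unproved, and it is precisely the hard part --- without it nothing is established. The flow argument is also not available in this generality: convergence of the negative gradient flow of $\parallel\mu\parallel^2$ to a critical point requires compactness of $M$ or properness of $\mup$ (the stratification theory of Heinzner--Schwarz--St\"otzel is invoked in the paper only under those hypotheses), and neither is assumed; a closed but noncompact orbit gives no a priori convergence, and even granting convergence the limit is merely a critical point, not a minimizer over $\overline{G\cdot x}$. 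Finally, the Kempf--Ness-type criterion ``$G\cdot y$ closed iff it meets $\mup^{-1}(0)$'' that you want to apply needs a semistability hypothesis that is not part of this lemma (it appears only in the subsequent proposition).

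The paper's proof is two lines and goes through the \emph{maximum}, not the minimum. Since $U^\C\cdot x$ is closed (read: compact, as throughout this paper), it equals $U\cdot x$, so $\parallel\mu\parallel$ is constant on it by $U$-equivariance. For any $y\in U^\C\cdot x$ the orthogonal decomposition $\mu=\mu_{i\liek}+\mup$ gives
\[
\nu_\liep(y)=\parallel\mup(y)\parallel^2\le\parallel\mu(y)\parallel^2=\parallel\mu(x)\parallel^2=\parallel\mup(x)\parallel^2=\nu_\liep(x),
\]
the last equality because $x\in M\subseteq\mu_{i\liek}^{-1}(0)$. Hence $x$ maximizes $\nu_\liep$ on $U^\C\cdot x$, a fortiori on $G\cdot x$, and Lemma~\ref{meo} (resting on Corollary 6.12 of \cite{heinzner-schwarz-stoetzel}: a local maximum of $\nu_\liep$ along a $G$-orbit forces $G\cdot x=K\cdot x$) gives that $G\cdot x$ is compact. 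You correctly identified the identity $\parallel\mu\parallel^2=\nu_\liep$ on $\mu_{i\liek}^{-1}(0)$, but you never used the decisive fact that a compact $U^\C$-orbit is a single $U$-orbit; that is what converts the hypothesis into a maximality statement and makes all the minimality machinery unnecessary.
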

\begin{proof}
Let $y\in U^\C \cdot x$. Since $\mu=\mu_{i\mathfrak k} + \mup$,  it follows that
\[
\parallel \mup (y) \parallel^2 \leq \parallel \mu (y) \parallel^2=\parallel \mu(x) \parallel^2=\parallel \mup(x) \parallel^2.
\]
Hence $\nu_\liep: U^\C \cdot x \lra \R$ achieves its maximum in $x$. By Lemma \ref{meo}, $G\cdot x$ is closed.
\end{proof}
We say that $M$ is $G$-semistable if $M=\{p\in M:\, \overline{U^\C \cdot p} \cap \mup^{-1}(0)\}$. In the papers \cite{heinzner-schwarz-Cartan,heinzner-schwarz-stoetzel}, the authors proved if $M$ is $G$-semistable then $G\cdot x$ is closed if and only if $G\cdot x \cap \mup^{-1}(0) \neq \emptyset$. As an application we get the following result.
\begin{proposition}\label{closed-orbit}
Assume that  $(Z,\omega)$ is $U^{\C}$-semistable and $M$ is $G$-semistable and it is contained in the zero fiber of $\mu_{i\mathfrak k}$.
Let $x\in M$. Then $G\cdot x$ is closed if and only if $U^\C \cdot x$ is closed.
\end{proposition}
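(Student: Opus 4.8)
The plan is to prove the two implications separately. In both directions the argument is a direct combination of the real and complex Kempf--Ness theorems with the orthogonal decomposition $\mu=\mu_{i\liek}+\mup$ already exploited in the previous lemma. For the implication ``$U^\C\cdot x$ closed $\Rightarrow$ $G\cdot x$ closed'' nothing new is needed: since $M$ is contained in the zero fiber of $\mu_{i\liek}$, this is exactly the lemma immediately preceding this proposition.

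For the converse, suppose $G\cdot x$ is closed. First I would use that $M$ is $G$-semistable: by the description of closed $G$-orbits recalled just before the statement \cite{heinzner-schwarz-Cartan,heinzner-schwarz-stoetzel}, the orbit $G\cdot x$ meets $\mup^{-1}(0)$, say at a point $y$. Replacing $x$ by $y$ does not affect the question, because closedness of $U^\C\cdot x$ depends only on the $U^\C$-orbit and $y\in G\cdot x\subseteq U^\C\cdot x$; moreover $y$ still lies in $M$ since $M$ is $G$-invariant. I would then note that $y$ is a zero of the full momentum map: $\mup(y)=0$ by the choice of $y$, while $\mu_{i\liek}(y)=0$ because $y\in M\subseteq\mu_{i\liek}^{-1}(0)$, and from $\parallel\mu\parallel^2=\parallel\mu_{i\liek}\parallel^2+\parallel\mup\parallel^2$ (the pointwise identity underlying the previous lemma) it follows that $\mu(y)=0$. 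Hence $U^\C\cdot x=U^\C\cdot y$ meets $\mu^{-1}(0)$, and since $(Z,\omega)$ is $U^\C$-semistable the complex Kempf--Ness theorem, in the form proved in \cite{heinzner-schwarz-stoetzel} (see also \cite{kirwan}) --- a $U^\C$-orbit meeting $\mu^{-1}(0)$ is closed --- yields that $U^\C\cdot x$ is closed, which completes the proof.

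I do not expect a genuine obstacle: there is no analytic subtlety, only the bookkeeping of the two semistability notions. The one point that requires a little care is keeping the representative $y$ inside $M$, so that the hypothesis $M\subseteq\mu_{i\liek}^{-1}(0)$ may be applied to it --- and this is precisely where the $G$-invariance of $M$ is used --- together with quoting the Kempf--Ness theorems in the exact form relating closedness of an orbit to its intersection with the zero level set, which is legitimate here thanks to the semistability assumptions on $Z$ and on $M$.
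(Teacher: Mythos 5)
Your proposal is correct and follows essentially the same route as the paper: the forward implication is the preceding lemma, and for the converse you use $G$-semistability to place a representative $y\in G\cdot x\cap\mup^{-1}(0)$, observe that $\mu_{i\liek}(y)=0$ because $y\in M$, hence $\mu(y)=0$, and conclude by $U^\C$-semistability of $Z$. The paper compresses this into the single identity $\mup^{-1}(0)\cap M=\mu^{-1}(0)\cap M$, which is exactly the decomposition $\mu=\mu_{i\liek}+\mup$ you spell out.
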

\begin{proof}
By the above result it is enough to prove  if $G\cdot x$ is closed then $U^\C \cdot x$ is closed. If $G\cdot x$ is closed then $G\cdot x \cap \mup^{-1}(0) \neq \emptyset$. Since $\mup^{-1}(0) \cap M=\mu^{-1}(0) \cap M$, the result follows.
\end{proof}
A corollary we prove a well-known result of Birkes \cite{birkes}, see also \cite{borel-chandra}
\begin{cor}
Let $G$ be a real form of $U$. Let $V$ be complex vector space and $W$ be real subspace of $V$ such that $V=W^\C$. Assume that $G$ acts on $W$. Let $w\in W$. Then $G\cdot w$ is closed if and only if $U^\C \cdot x$ is closed.
\end{cor}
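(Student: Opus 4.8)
The plan is to read the statement off from Proposition \ref{closed-orbit}, applied to the linear action of $U^\C$ on $Z=V$ and to the linear submanifold $M=W$. First I would realize $G$ compatibly inside $U^\C=G^\C$: since the centre of $G$ acts on $W$ through a character, it is harmless to assume $G$ semisimple, and after the standard conjugation (Mostow, or the minimal-vector theory of Richardson--Slodowy) we may fix a $K$-invariant Euclidean product on $W$ for which $\liep$ acts by symmetric operators, with $U$ the corresponding compact real form of $U^\C$. Extending this product sesquilinearly gives a Hermitian product on $V=W\oplus iW$ which is $U$-invariant and invariant under the conjugation $\sigma$ with fixed-point set $W$; thus $U$ acts unitarily, $\liek$ resp. $\liep$ act by skew-Hermitian resp. Hermitian operators (each preserving $W$), and $G=K\exp(\liep)$ is compatible with the Cartan decomposition of $U^\C$. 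Take $Z=V$ with the K\"ahler form $\omega$ associated with this Hermitian product, so that $U^\C$ acts holomorphically and linearly, $U$ preserves $\omega$, and the $U$-action carries the standard linear momentum map $\mu$; take $M=W$, a closed connected $G$-invariant submanifold of $Z$.

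It then remains to verify the three hypotheses of Proposition \ref{closed-orbit}. First, $Z=V$ is $U^\C$-semistable: this is classical for linear reductive actions, and it also follows from linearity, because at a critical point $v$ of $\|\mu\|^2$ one has $\beta_Z(v)=0$ with $\beta=\mu(v)$, so $\|\beta\|^2=\langle\mu(v),\beta\rangle$ is a scalar multiple of $\langle\beta_Z(v),v\rangle=0$, forcing $v\in\mu^{-1}(0)$; since the negative gradient flow of $\|\mu\|^2$ is tangent to $U^\C$-orbits and converges (again classical for linear actions), every orbit closure meets $\mu^{-1}(0)$. Secondly, $M=W$ is $G$-semistable: the identical argument applied to $\nu_\liep$ and the linear $G$-action on $W$ — that is, the real minimal-vector theory — shows every critical point of $\nu_\liep$ on $W$ lies in $\mup^{-1}(0)$, while the negative gradient flow of $\nu_\liep$, which at a point is the fundamental vector field of $-2\,\mup(\cdot)\in\liep$, is tangent to $G$-orbits and converges, so $\overline{G\cdot w}\cap\mup^{-1}(0)\neq\emptyset$ for every $w\in W$. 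Thirdly, $M=W$ lies in the zero fiber of $\mu_{i\liek}$: for $w\in W$ and $\xi\in\liek$ we have $\xi w\in W$ since $\xi$ commutes with $\sigma$, and then $\langle\xi w,w\rangle$ is real by $\sigma$-invariance of the Hermitian product yet purely imaginary because $\xi$ is skew-Hermitian, hence zero; so $\mu(w)\perp\liek$, which is precisely the condition $\mu_{i\liek}(w)=0$.

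Granting these three points, Proposition \ref{closed-orbit} gives at once that $G\cdot w$ is closed if and only if $U^\C\cdot w$ is closed; as for orbits of real, resp. complex, reductive groups on affine varieties closedness in the Euclidean and Zariski topologies coincide, this is Birkes' theorem. The substantive inputs are the two semistability statements, and within them the only genuinely delicate point is the convergence of the negative gradient flow of the norm-square map, which is settled either by a \L ojasiewicz-type inequality or by invoking the classical Kempf--Ness and Richardson--Slodowy results — the vanishing of all critical values being the elementary consequence of linearity isolated above.
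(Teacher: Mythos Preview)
Your proposal is correct and follows essentially the same route as the paper: set up the linear action with $Z=V$, $M=W$, verify the three hypotheses of Proposition~\ref{closed-orbit}, and conclude. The paper simply cites \cite{rs,biliosp} for the two semistability statements and invokes the fact that $W$ is Lagrangian (which is exactly your computation $\langle\xi w,w\rangle=0$ for $\xi\in\liek$) to get $M\subset\mu_{i\liek}^{-1}(0)$; your version just unpacks those citations.
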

\begin{proof}
It is well-known that $V$, respectively $W$, is $U^\C$-semistable, respectively $G$-semistable \cite{rs}, see also \cite{biliosp}. Since $W$ is a Lagrangian subspace of $V$, applying the above Proposition the result follows.
\end{proof}
\section{norm square of the gradient map}\label{se2}
We investigate splitting results for $G$-invariant real submanifolds of $(Z,\omega)$.
\begin{proposition}\label{meo2}
Let $M$ be a $G$-stable connected submanifold of $Z$ and let $\mup:M \lra \liep$ be the restricted gradient map. Then the square of the gradient map $\nu_\liep:M \lra \R$ is constant  if and only if any $G$ orbit is compact.
\end{proposition}
\begin{proof}
Assume $\nu_p$ is constant. Let $x\in M$. Then $\nu_\liep:G\cdot x \lra \R$ is constant and so $\nu_\liep$ has a maximum on $x$. By Lemma \ref{meo} $G\cdot x=K\cdot x$ and so it is compact. Vice-versa, assume that any $G$ orbit is compact. By Lemma \ref{meo} $(\mathrm d \nu_p)_x=0$   for any $x\in M$. Since $M$ is connected it follows $\nu_\liep$ is constant.
\end{proof}
The following result is proved in \cite{heinzner-schwarz-stoetzel}. For the sake of completeness we give a proof.
\begin{proposition}\label{splitting}
Let $M$ be a $G$-stable connected submanifold of $Z$ and let $\mup:M \lra \liep$ be the restricted gradient map.
If  $\nu_\liep$ is constant, then $\mup (M)=K\cdot \beta$, $\mu^{-1}(\beta)$ is a submanifold and the following splitting
\[
M=K\times_{K^\beta} \mup^{-1}(\beta),
\]
holds.
\end{proposition}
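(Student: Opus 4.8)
The plan is to combine Proposition~\ref{meo2} with the Morse-type stratification theorem for $\nu_\liep$. First I would observe that, by Proposition~\ref{meo2}, the hypothesis that $\nu_\liep$ is constant is equivalent to the statement that every $G$-orbit in $M$ is compact; in particular $M$ is covered by compact orbits, and by Lemma~\ref{meo} each such orbit satisfies $G\cdot x=K\cdot x$ and is a critical point of $\nu_\liep$. So the entire manifold $M$ is a critical set of $\nu_\liep$ and $\nu_\liep\equiv c$ for some constant $c\ge 0$. If $c=0$ then $\mup\equiv 0$, $M=\mup^{-1}(0)$ and $K^\beta=K$ for $\beta=0$, so the claimed splitting is trivial; hence I may assume $c>0$ and fix any $x_0\in M$, $\beta:=\mup(x_0)$, noting $\|\beta\|^2=c$.

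Next I would identify $\mup(M)$ with a single adjoint orbit. Since $\nu_\liep$ is constant, $M$ sits inside a single critical level; by the stratification theorem of \cite{heinzner-schwarz-stoetzel} (applied with $M$ playing the role of the ambient space, using $G$-invariance of $M$ and the fact that $\mup=\beta_M$ has $\mathrm{grad}\,\nu_\liep$ vanishing identically), the connected critical piece containing $x_0$ is a $G$-stable submanifold on which $\mup$ takes values in the single adjoint orbit $K\cdot\beta$, and the pre-stratum associated to it is $G$-equivariantly diffeomorphic to $G\times_{Q}\mup^{-1}(\beta)$, where $Q$ is the appropriate parabolic-type subgroup; because \emph{every} orbit is compact, $G=K$ effectively acts (each $G\cdot x=K\cdot x$), so the pre-stratum collapses to $K\times_{K^\beta}\mup^{-1}(\beta)$. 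Connectedness of $M$ then forces $M$ to equal this single pre-stratum, giving $\mup(M)=K\cdot\beta$ and the splitting; the fact that $\mup^{-1}(\beta)$ is a submanifold follows since $\beta$ is a regular value in the relevant sense — concretely, $\mup:M\to K\cdot\beta$ is a $K$-equivariant submersion onto a homogeneous space (its differential has kernel exactly $(\liep\cdot x)^\perp$ and image of constant dimension $\dim K\cdot\beta$ by Lemma~\ref{urka} at every point, since all orbits are of the same compact type along the connected $M$), so $\mup^{-1}(\beta)$ is a closed embedded submanifold and the map $K\times_{K^\beta}\mup^{-1}(\beta)\to M$, $[k,y]\mapsto k\cdot y$, is a well-defined $K$-equivariant diffeomorphism.

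I would then verify the fiber-bundle structure directly rather than quoting it: define $\Phi:K\times_{K^\beta}\mup^{-1}(\beta)\to M$ by $\Phi([k,y])=k\cdot y$. It is well-defined because $\mup$ is $K$-equivariant and $K^\beta$ fixes $\beta$; it is surjective because for any $x\in M$ we have $\mup(x)\in K\cdot\beta$, say $\mup(x)=\Ad(k)\beta$, whence $k^{-1}x\in\mup^{-1}(\beta)$; it is injective because $k_1 y_1=k_2 y_2$ forces $\Ad(k_1)\beta=\mup(k_1 y_1)=\mup(k_2 y_2)=\Ad(k_2)\beta$, so $k_2^{-1}k_1\in K^\beta$, and then $[k_1,y_1]=[k_2,y_2]$. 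Smoothness of $\Phi$ is clear, and smoothness of the inverse follows from the local-section property of the $K$-bundle $K\to K/K^\beta=K\cdot\beta$ pulled back along $\mup$, which is a submersion as noted. This gives the asserted splitting $M=K\times_{K^\beta}\mup^{-1}(\beta)$.

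The main obstacle I anticipate is the step asserting that $M$ is \emph{all} of one pre-stratum, i.e.\ that $\mup(M)$ is a single adjoint orbit rather than a union of several. Here connectedness of $M$ is essential, but one must be careful: a priori the stratification could a priori split $M$ into several strata, or $\mup(M)$ could be a nontrivial union of orbits $K\cdot\beta_i$ with $\|\beta_i\|^2=c$. The resolution is that along the connected $M$ the function $x\mapsto\dim K\cdot x$ and the conjugacy class of $K_x$ are controlled: since $\nu_\liep$ is everywhere critical, the normal Hessian degenerates and the usual stratification argument shows the stratum is open in $M$; being also closed (as a critical piece of the constant function, or by properness arguments as in \cite{heinzner-schwarz-stoetzel,kirwan}), it exhausts the connected $M$. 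Making this rigorous is exactly what is packaged in the cited stratification theorem, so for the present write-up I would invoke \cite{heinzner-schwarz-stoetzel} for that openness/closedness and concentrate the explicit argument on the diffeomorphism $\Phi$ above.
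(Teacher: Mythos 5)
Your proposal is correct and follows essentially the same route as the paper: invoke the Heinzner--Schwarz--St\"otzel stratification theorem to conclude that the connected $M$ is a single (maximal) pre-stratum with $\mup(M)=K\cdot\beta$, observe that $\mup:M\to K\cdot\beta$ is a submersion so $\mup^{-1}(\beta)$ is a submanifold, and then check directly that $[k,y]\mapsto ky$ is a well-defined, $K$-equivariant, bijective local diffeomorphism (the paper uses Proposition~\ref{meo2} to get $\liep\cdot x\subset\liek\cdot x$ and deduces surjectivity of the differential, while you use local sections of $K\to K/K^\beta$ — an immaterial difference). No gaps beyond what the paper itself delegates to the cited stratification theorem.
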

\begin{proof}
Since $\nu_\liep$ is constant,  it follows that $M=S_\beta$, where $S_\beta$ is the maximal strata, and  $\mup(S_\beta)=\mup(M)=K\cdot \beta$ \cite[$p.21$]{heinzner-schwarz-stoetzel}. In particular $M=K\mup^{-1}(\beta)$ and we may think $\mup:M \lra K\cdot \beta$. Therefore  $\beta$ is a regular value and so $\mup^{-1}(\beta)$ is a $K^\beta$-invariant submanifold of $M$.

Let $x\in \mup^{-1}(\beta)$. By the $K$-equivariance of $\mup$, it is easy to check $K\cdot x \cap \mup^{-1} (\beta)= K^\beta \cdot x$. We claim that the same holds infinitesimally, i.e., $T_x \mup^{-1} (\beta) \cap \liek \cdot x=\liek^\beta \cdot x$. Indeed, let $v \in T_x \mup^{-1} (\beta) \cap \liek \cdot x$. Let $\xi \in \liek$ such that $v=\xi_M (x)$. Since $T_x \mup^{-1}(\beta)=\mathrm{Ker}\, (\mathrm{d}\, \mup)_x$,
 we get
\[
0=\desudtzero \mup(\exp(t\xi)x)=\desudtzero \mathrm{Ad}(\exp(t\xi)) \beta,
\]
and so $v\in \mathfrak k^\beta \cdot x$.

We define the map
\[
\Psi:K \times_{K^\beta} \mup^{-1}(\beta) \lra M \qquad [k,x] \mapsto kx.
\]
It is easy to check that $\Psi$ is $K$-equivariant and smooth.
Since $\mup(M)=K\cdot \beta$ it follows  $M=K\cdot \mup^{-1}(\beta)$ and so $\Psi$ is surjective. It is also injective since $kx=k'x$ if and only if $k'^{-1}k\in K^\beta$, proving it is bijective. Now, we proof that $\Psi$ is a local diffeomorphism. This implies that $\Psi$ is a diffeomorphism concluding the proof. Note that it is enough to prove $\mathrm d \Psi_{[e,x]}$ is a diffeomorphism by the $K$-equivariance. Now,
\[
T_x M=(\mathfrak p \cdot x)\oplus (\mathfrak p \cdot x)^\perp =(\mathfrak p \cdot x) \oplus T_x \mup^{-1}(\beta).
\]
By Proposition \ref{meo2} any $G$ orbit is a $K$ orbit. This implies $\mathfrak p \cdot x\subset \mathfrak k \cdot x$. Since $\liek^\beta \cdot x \subset (\liep \cdot x)^\perp$, it follows that the map
\[
\liep \cdot x \hookrightarrow \liek \cdot x \longrightarrow \liek \cdot x / \liek^\beta \cdot x,
\]
is injective. Therefore $\mathrm d \Psi_{[e,x]}$ is surjective. Since $\Psi$ is bijective it follows that $\mathrm d \Psi_{[e,x]}$ must be bijective.
\end{proof}
We are ready to prove the splitting results.
\begin{proof}[Proof of Theorem \ref{main2}]
Since $\nu$ is constant, applying Lemma \ref{meo2} it follows that any $U^\C$ orbit is compact and it is a complex $U$ orbit. Then for any $x\in M$, we have  $U_{x}=U_{\mu(x)}$ \cite{guillemin}. Since $U_{\mu(x)}$ is a centralizer of a torus, then the center of $U$ does not act on $M$ and so $U$ is semisimple. By the above proposition $M=U/U^\beta \times \mu^{-1}(\beta)$ and for very $x\in \mu^{-1}(\beta)$, $U_x=U^\beta$ and so $U_x$ acts trivially on $\mu^{-1}(\beta)$. If $x\in \mu^{-1}(\beta)$, then
\[
T_x M=(i\liu \cdot x)\stackrel{\perp}{\oplus} T_x \mu^{-1}(\beta)=T_x U\cdot x \stackrel{\perp}{\oplus} T_x \mu^{-1}(\beta).
\]
This implies that the $U$ action on $M$ is polar with section $\mu^{-1}(\beta)$ \cite{dadok} and so $\mu^{-1}(\beta)$ is totally geodesic. We claim that the above splitting is Riemannian.

Let $\xi \in \liu$ and let $\xi_M$ the induced vector field. It is enough to prove that the function $g(\xi_M, \xi_M)$ is constant when restricted to $\mu^{-1} (\beta)$.

Let $x\in \mu^{-1}(p)$ and $v\in T_x \mu^{-1}(p)$. We may extend $v$ to a vector field on a neighborhood of $p$, that we denote by $X$,  such that $g(X,\xi_M)=0$ for any $z \in W$ and for any $\xi\in \liu$.  Indeed, let $\xi_1,\ldots,\xi_k \in \liu$ such that $(\xi_1)_M (x), \ldots (\xi_k)_M (x)$ is a basis of $T_x U\cdot x$. Since the $U$ action on $M$ has only one type of orbit, it follows that there exists a neighborhood $W$ of $x$ such that $(\xi_1)_M (y), \ldots, (\xi_k)_M (y)$ is a basis of $T_y U\cdot y$ for any $y\in W$. Applying a Gram-Schmidt process we get an orthonormal basis $\{Y_1,\ldots,Y_k\}$ of $T_y U\cdot y$ for any $y\in W$. Let $\tilde X$ any local extension of $v$. Then
\[
X=\tilde X -g(Y_1,\tilde X) Y_1 -\cdots -g(Y_k,\tilde X) Y_k,
\]
satisfies the above conditions. Moreover, for any $z\in \mup^{-1}(\beta) \cap W$, the vector field $X$ lies in $T_z \mup^{-1}(\beta)$ due to the orthogonal splitting $T_z M=T_z U\cdot z \stackrel{\perp}{\oplus} T_z \mup^{-1}(\beta)$.

Let $\nu_M=-J(\xi_M)$ Then  $J(\nu_M)=\xi_M$. Since $M=U/U_\beta \times \mu^{-1}(p)$, it follows $[X,\xi_M]=[X,\nu_M]=0$ along $\mup^{-1}(\beta)$. By the closeness of $\omega$, we have
\[
\mathrm d \omega (v,\nu_M(x),\xi_M (x))=0.
\]
On the other hand, by the Cartan  formula \cite{kobayashi}, we have
\[
\begin{split}
\mathrm d \omega (v,\nu_M(x),\xi_M (x))&=X\omega(\nu_M, \xi_M) + \nu_M \omega (\xi_M,X)+ \xi_M \omega (X,\nu_M ) \\
&-\omega([X,\nu_M],\xi_M)-\omega([\nu_M,\xi_N],X)-\omega([\xi_M, X],Y).
\end{split}
\]
Now, $\omega([X,\nu_M],\xi_M)=\omega([\xi_M, X],Y)=0$ due to the fact that $[X,\nu_M](x)=[\xi_M, X](x)=0$, The term $\omega([\nu_M,\xi_N],X)=0$, since
\[
\omega([\nu_M,\xi_N],X)=g(J([\nu_M,\xi_M],X)=0
\]
 due to the facts that the $U$ orbit is complex and the splitting $T_x M=T_x \mu^{-1}(\beta) \stackrel{\perp}{\oplus} T_x U\cdot x$ holds. Finally, $
\nu_M \omega (\xi_M,X)=0$, respectively $\xi_M \omega (X,\nu_M )=0$, due to the fact that
\[
\omega(\xi_M,X)=g(J \xi_M, X)=0,
\]
respectively,
\[
\omega(X,\nu_M)=g(JX,\nu_M)=-g(X,J\nu_M)=0,
\]
along $U\cdot x$. Therefore
\[
0=\mathrm d \omega (v,\nu_M(x),\xi_M (x))=X\omega(\nu_M,\xi_M)=Xg(J (\nu_M),\xi_M)=Xg(\xi_M,\xi_M),
\]
and so  $g(\xi_M,\xi_M)$ is constant along $\mup^{-1}(\beta)$ and the result is proved.
\end{proof}
\begin{proof}[Proof of Theorem \ref{main3}]
By Proposition \ref{splitting} $M=K \times_{K^\beta} \mup^{-1}(\beta)$. By Proposition \ref{meo2} it follows $U^\C \cdot x$ is compact for any $x\in \mup^{-1}(\beta)$. Let $x\in \mup^{-1}(\beta)$.
By Proposition \ref{closed-orbit}, $U^\C \cdot x$ is compact as well and $\mup(x)=\mu(x)=\beta$. This implies $K_x=K\cap U_x=K\cap U^{\beta}=K^{\beta}$ for any $x\in \mup^{-1}(\beta)$ and so $M=K/K^{\beta} \times \mup^{-1}(\beta)$. The Lie algebra of the center of $G$ is contained in the Lie algebra of the center of $U^\C$. On the other hand, the Lie algebra of the center of $U^\C$ is the complexification of the Lie algebra of the center of $U$ which acts trivially on $M$. This implies $G$ is semisimple. Finally, keeping in mind that
$\omega$ is closed and $U^\C \cdot x$ is compact for any $x\in \mup^{-1}(\beta)$, applying the same idea of the above proof we get the splitting $M=K/K^{\beta} \times \mup^{-1}(\beta)$ is Riemannian.
\end{proof}


\begin{thebibliography}{99}
\bibitem{bg}
\textsc{L. Bedulli and A. Gori}
\newblock A splitting result for compact symplectic manifolds,
\newblock \emph{Results Math.} \textbf{47} (2005), 194--198.
%
\bibitem{birkes}
\textsc{D. Birkes},
\newblock Orbits of linear algebraic groups,
\newblock \emph{Ann. Math.} \textbf{93} (2) (1971) 459--475.
%
\bibitem{bs}
\textsc{L. Biliotti},
\newblock A note on moment map on symplectic manifolds,
\newblock \emph{Bull. Belg. Math. Soc. Simon Stevin} \textbf{16} (2009), 107--116.
%
\bibitem{biliosp}
\textsc{L. Biliotti},
\newblock The Kempf-Ness Theorem and invariant theory for real reductive representations,
\newblock \emph{SPJM} published on-line 23 September 2019.
%\bibitem{biliotti-ghigi-heinzner-2}
%\textsc{ L.~Biliotti and A.~Ghigi, and P.~Heinzner},
%{P}olar orbitopes.
%Comm. Anal. Geom. 21 (3) (2013), 579--606.
%
%
%\bibitem{bgh-israel-p}
%\textsc{L.~Biliotti, A.~Ghigi and P.~Heinzner},
%Invariant convex sets in polar representations,
%Israel J. Math. 213 (2016), 423--441
%
%
%\bibitem{bilio-ghigipr}
%\textsc{L. Biliotti and A. Ghigi},
%\newblock Remarks on the abelian convexity theorem,
%\newblock \emph{Proc. Amer. Math. Soc.} \textbf{146}  (12) (2018), 5409--5419.
%
\bibitem{borel-chandra}
\textsc{A. Borel and Harish-Chandra},
\newblock Arithmetic subgroups of algebraic groups,
\newblock \emph{Ann. Math.} \textbf{75} (2) (1962) 485--535.
%
\bibitem{dadok}
\textsc{J. Dadok},
\newblock Polar coordinates induced by actions of compact Lie groups,
\newblock \emph{Trans. Amer. Math. Soc.} \textbf{288} (1) (1985), 125--137.
%
\bibitem{pg}
\textsc{A. Gori and F. Podest\`a},
\newblock A note on the moment map on compact K\"ahler manifold,
\newblock \emph{Ann.Global. Anal. Geom.} \textbf{26} (2004), 315--318.
%
\bibitem{guillemin}
\textsc{V. Guillemin and S. Sternberg},
\newblock \emph{Symplectic techniques in physics, 2nd ediction},
\newblock Cambridge University Press, Cambridge, 1990.
%
\bibitem{gjt}
\textsc{Y. Guivarc'h, L. Ji, and J. C. Taylor},
\newblock \emph{Compactifications of symmetric spaces}.
\newblock Progress in Mathematics \textbf{156} Birkh\"auser Boston Inc., Boston, MA, 1998
%
%\bibitem{jabo}
%\textsc{M. Jablonski},
%\newblock Distinguished orbits ir reductive groups,
%\newblock \emph{Rocky Mountain J. Math.} \textbf{42} (5) (2012) 1521--1549.
%
%\bibitem{heinz-stoezel}
%\textsc{P.~Heinzner and H.~St{\"o}tzel},
%\newblock Semistable points with respect to real forms,
%\newblock \emph{Math. Ann.} \textbf{338} (2007), 1--9.
%
\bibitem{heinzner-schwarz-Cartan}
\textsc{P.~Heinzner and  G.~W. Schwarz},
\newblock Cartan decomposition of the moment map,
\newblock \emph{Math. Ann.} \textbf{337} (1) (2007) 197--232.
%
\bibitem{heinzner-schwarz-stoetzel}
\textsc{P.~Heinzner, G.~W. Schwarz and H.~St{\"o}tzel},
\newblock Stratifications with respect to actions of real reductive groups,
\newblock \emph{Compos. Math} \textbf{144} (1) (2008) 163--185.
%
\bibitem{heinzner-stoetzel-global}
\textsc{P.~Heinzner and H.~St{\"o}tzel},
\newblock Critical points of the square of the momentum map,
\newblock \emph{Global aspects of complex geometry},
211--226.  Springer, Berlin, 2006.
%
%
%\bibitem{heinzner-schutzdeller}
%\textsc{P. Heinzner and P. Sch\"utzdeller},
%\newblock Convexity properties of gradient maps.
%\newblock \emph{Adv. Math.},  \textbf{225} (3) (2010) 1119--1133.
%
\bibitem{helgason}
\textsc{S. Helgason},
\newblock \emph{Differential Geometry, Lie Groups and Symmetric Spaces},
\newblock Corrected reprint of the 1978 original. Graduate Studies in Mathematics, 34. American Mathematical Society, Providence, RI, 2001.
%
\bibitem{kirwan}
\textsc{F.~C. Kirwan},
\newblock \emph{Cohomology of quotients in symplectic and algebraic geometry},
\newblock volume \textbf{31} of Mathematical Notes, Princeton University Press,
  Princeton, NJ, 1984.
%
\bibitem{knapp-beyond}
\textsc{A.~W. Knapp},
\newblock \emph{Lie groups beyond an introduction},
\newblock volume \textbf{140}, Progress in Mathematics, Birkh\"auser Boston Inc., Boston, MA, second edition, 2002.
%
%
\bibitem{kobayashi}
\textsc{S. Kobayashi and K.  Nomizu},
Foundations of differential geometry. Vol. I. John Wiley \&
Sons, Inc., New York (1996)
%
%
\bibitem{rs}
\textsc{R. W. Richardson and P.J. Slodowoy},
\newblock Minumun vectors for real reductive algebraic groups,
\newblock \emph{J. London Math. Soc.} \textbf{42} (2) (1990) 409--429.
%
%\bibitem{mumford-GIT} D.~Mumford, J.~Fogarty, F.~Kirwan.
%\newblock {\em Geometric invariant theory}, volume~34 of {\em
%Ergebnisse der Mathematik und ihrer Grenzgebiete (2)}.  \newblock
%Springer-Verlag, Berlin, third edition, 1994.
%
%\bibitem{mundet-Crelles} I.~Mundet~i Riera.  \newblock A
%{H}itchin-{K}obayashi correspondence for {K}\"ahler fibrations.
%\newblock {\em J. Reine Angew. Math.}, 528:41--80, 2000.
%
%\bibitem{mundet-Trans} I.~Mundet~i Riera.  \newblock A
%{H}ilbert-{M}umford criterion for polystability in {K}aehler
%geometry.  \newblock {\em Trans. Amer. Math. Soc.},
%362(10):5169--5187, 2010.
%
%\bibitem{mundet-cont} I.~Mundet~i Riera.
%\newblock Maximal weights in K\"ahler geometry: flag manifolds and Tits distance. (
%With an appendix by A. H. W. Schmitt. \newblock{Contemp. Math.}, 522 "Vector bundles and complex geometry", 113--129, Amer. Math. Soc., Providence, RI, 2010.
%
\bibitem{matsuki}
\textsc{T. Matsuki},
\newblock Orbits on affine symmetric spaces under the action of parabolic subgroups
\newblock \emph{Hiroshima Math. J.} \textbf{12} (1982), 307--320.
%
\bibitem{wolf}
\textsc{J. Wolf},
\newblock The action of a real semisimple group on a complex flag manifold I. Orbit structure and holomorphic arc,
\newblock \emph{Bull. Amer. Math. Soc.} \textbf{75} (1969) 1121--1237.
\end{thebibliography}
\end{document}